\def\Ddots{\mathinner{\mkern1mu\raise\p@
\vbox{\kern7\p@\hbox{.}}\mkern2mu
\raise4\p@\hbox{.}\mkern2mu\raise7\p@\hbox{.}\mkern1mu}}
\newcommand{\ostarr}{\mathbin{\mathpalette\make@circled\star}}
\newcommand{\make@circled}[2]{%
  \ooalign{$\m@th#1\smallbigcirc{#1}$\cr\hidewidth$\m@th#1#2$\hidewidth\cr}%
}
\newcommand{\smallbigcirc}[1]{%
  \vcenter{\hbox{\scalebox{0.77778}{$\m@th#1\bigcirc$}}}%
}
\titleformat*{\subsection}{\Large\bfseries}
\titleformat*{\subsubsection}{\large\bfseries}
\titleformat*{\paragraph}{\large\bfseries}
\titleformat*{\subparagraph}{\large\bfseries}
\theoremstyle{plain}
\newtheorem{thm}{Theorem}[section]
\newtheorem{prop}[thm]{Proposition}
\newtheorem{lem}[thm]{Lemma}
\newtheorem{cor}[thm]{Corollary}
\theoremstyle{definition}
\newtheorem{fact}[thm]{Fact}
\newtheorem{defn}[thm]{Definition}
\newtheorem{rem}[thm]{Remark}
\newtheorem{question}[thm]{Question}
\newtheorem{con}[thm]{Conjecture}
\newtheorem{example}[thm]{Example}
\theoremstyle{plain}
\newcommand{\thistheoremname}{}
\newtheorem*{genericthm*}{\thistheoremname}
\newenvironment{namedthm*}[1]
  {\renewcommand{\thistheoremname}{#1}%
   \begin{genericthm*}}
  {\end{genericthm*}}
\newcommand{\N}{\mathbb{N}}
\newcommand{\Z}{\mathbb{Z}}
\newcommand{\R}{\mathbb{R}}
\newcommand{\bN}{\beta\mathbb{N}}
\date{\vspace{-5ex}}
\begin{document}

\title{Exponentiations of ultrafilters}

\author{Lorenzo Luperi Baglini \\ 
\textit{lorenzo.luperi@unimi.it}
\footnote{Dipartimento di Matematica, Universit\`{a} di Milano, Via Saldini 50, 20133 Milano, Italy. \url{lorenzo.luperi@unimi.it}. Supported by PRIN 2022 ``Logical methods in combinatorics'', 2022BXH4R5, MIUR (Italian Ministry of University and Research).}}

\maketitle
\begin{abstract}
In recent years, several problems regarding the partition regularity of exponential configurations have been studied in the literature, in some cases using the properties of specific ultrafilters. In this paper, we start to lay down the foundations of a general theory of exponentiations of ultrafilters, with a particular focus on their combinatorial properties and the existence of idempotents.
\end{abstract}

\noindent \textbf{Mathematics subject classification 2020:} 05D10, 22A15, 54D35.\\
\noindent \textbf{Keywords:} Stone-\v{C}ech compactification, idempotents, exponential triples, partition regularity.

\section{Introduction}

%\textcolor{red}{please update reference 8}
A large class of problems in arithmetic Ramsey theory regards the so-called partition regular configurations and equations on $\N=\{1,2,\ldots\}$. We let $\omega=\N\cup\{0\}$.

\begin{defn} Let $f_{1}\left(x_{1},\ldots,x_{n}\right),\ldots,f_{m}\left(x_{1},\ldots,x_{n}\right):\N^{n}\rightarrow \N$. We say that the configuration $\{f_{1}\left(x_{1},\ldots,x_{n}\right),\ldots,f_{m}\left(x_{1},\ldots,x_{n}\right)\}$ is partition regular on $\N$ (PR from now on) if for all finite partitions $\N=A_{1}\cup\ldots\cup A_{k}$ there are $i\leq k$ and $x_{1},\ldots,x_{n}\in\N$ such that for all $j\leq m$ $f_{j}\left(x_{1},\ldots,x_{n}\right)\in A_{i}$.

Analogously, if $f_{1}\left(x_{1},\ldots,x_{n}\right),\ldots,f_{m}\left(x_{1},\ldots,x_{n}\right):\N^{n}\rightarrow \omega$ and the system $\sigma\left(x_{1},\ldots,x_{n}\right)$ is defined by \[\begin{cases} f_{1}\left(x_{1},\ldots,x_{n}\right)=0,\\ \ \ \ \ \ \ \ \ \ \vdots \\ f_{m}\left(x_{1},\ldots,x_{n}\right)=0\end{cases}\] we say that $\sigma\left(x_{1},\ldots,x_{n}\right)$ is PR if for every finite partition $\N=A_{1}\cup\ldots\cup A_{k}$, there are $i\leq k$ and $x_{1},\ldots,x_{n}\in A_{i}$ such that  $\sigma\left(x_{1},\ldots,x_{n}\right)=\vec{0}$.\end{defn}

In \cite{key-2}, Schur proved one of the first and most important results in rithmetic Ramsey theory. 
\begin{thm}[Additive Schur's Theorem]\label{add schur} The configuration $\{ x,y,x+y\}$ is PR. 
\end{thm}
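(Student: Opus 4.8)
The plan is to derive Schur's theorem from the (finite or infinite) Ramsey theorem for pairs, via the standard device of colouring \emph{differences} of integers rather than the integers themselves. I would first recall Ramsey's theorem in the form: for every $k\in\N$ there is $N\in\N$ such that every $k$-colouring of the edges of the complete graph on $\{1,\ldots,N\}$ contains a monochromatic triangle (equivalently, and slightly cleaner to state, every finite colouring of $[\N]^{2}$ admits an infinite monochromatic subset).

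Given a partition $\N=A_{1}\cup\cdots\cup A_{k}$, define an edge-colouring $\chi$ of the complete graph on $\N$ (or on $\{1,\ldots,N\}$ with $N$ as above) by letting $\chi(\{m,n\})$ be the unique index $i\le k$ with $|m-n|\in A_{i}$; this is well defined because $m\ne n$ forces $|m-n|\in\N$. By Ramsey's theorem there exist $a<b<c$ whose three pairwise edges all receive the same colour $i$. Setting $x=b-a$ and $y=c-b$, one has $x=b-a\in A_{i}$, $y=c-b\in A_{i}$, and $x+y=c-a\in A_{i}$, so $\{x,y,x+y\}\subseteq A_{i}$, which is exactly what partition regularity of $\{x,y,x+y\}$ requires.

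There is no genuine obstacle in this argument: the only real step is the reformulation that converts a statement about monochromatic additive triples into a statement about a monochromatic triangle in an edge-coloured graph, after which Ramsey's theorem does all the work. For completeness I would also note the ultrafilter-theoretic proof, which is the one closest in spirit to the rest of this paper: using an idempotent ultrafilter $p=p+p$ in the semigroup $(\bN,+)$ (whose existence follows from the Ellis--Numakura lemma), for any $A\in p$ one checks that $A^{\star}=\{n\in A:-n+A\in p\}$ belongs to $p$ and satisfies $-n+A^{\star}\in p$ for every $n\in A^{\star}$; choosing $x\in A^{\star}$ and then $y\in A^{\star}\cap(-x+A^{\star})\in p$ gives $x,y\in A$ and $x+y\in A^{\star}\subseteq A$. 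Applying this to the cell $A$ of the partition that lies in $p$ again yields the desired monochromatic triple.
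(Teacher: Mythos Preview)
Your argument is correct: both the Ramsey-theoretic proof via the ``colour the difference'' edge-colouring and the idempotent-ultrafilter proof are standard and valid derivations of Schur's theorem. There is nothing to fix.

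However, there is nothing to compare against either. In the paper, Theorem~\ref{add schur} is not proved at all: it is simply quoted as a classical result and attributed to Schur's 1916 paper~\cite{key-2}. The paper uses it only as background and as the base case for the $\log_{2}$-partition trick that yields the multiplicative version. So your write-up goes well beyond what the paper does here; if anything, the ultrafilter proof you sketch is the one more in keeping with the spirit of the surrounding material.
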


Equivalently, the additive Schur's Theorem states that the equation $x+y=z$ is PR. 

Applying transformations to partitions allows to easily deduce new results by just considering the shape of transformed partition regular configurations. A useful trick, that sometimes allows turning additive structures in multiplicative structures, and multiplicative structures in exponential ones, is that of considering $\log_{2}$-base partitions: for $B\subseteq\N$, let $2^{-B}=\{n\in\N\mid 2^{n}\in B\}$ and, given a partition $\N=A_{1}\cup\ldots\cup A_{k}$, let the associated $\log_2$-base partition be given by $\N=2^{-A_{1}}\cup\ldots\cup 2^{-A_{k}}$. For example, given any finite partition of $\N$, in the $\log_2$-base partition we find monochromatic $x,y,z$ with $x+y=z$ by Theorem \ref{add schur}; hence, $2^{x},2^{y},2^{z}=2^{x+y}=2^{x}\cdot 2^{y}$ were monochromatic with respect to the original partition. This proves the multiplicative version of Schur.

\begin{thm}[Multiplicative Schur's Theorem] The configuration $\{x,y,x\cdot y\}$ is PR. \end{thm}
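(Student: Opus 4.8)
The plan is to deduce the statement directly from the Additive Schur's Theorem (Theorem \ref{add schur}) by means of the $\log_2$-base partition described just above, transporting an additive monochromatic configuration $\{x,y,x+y\}$ into a multiplicative one via the homomorphism $n\mapsto 2^{n}$, which sends sums to products.

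First I would fix an arbitrary finite partition $\N=A_{1}\cup\ldots\cup A_{k}$ and pass to its associated $\log_2$-base partition $\N=2^{-A_{1}}\cup\ldots\cup 2^{-A_{k}}$, where $2^{-A_{i}}=\{n\in\N\mid 2^{n}\in A_{i}\}$. The first thing to check is that this is indeed a finite partition of $\N$: since $n\mapsto 2^{n}$ maps $\N$ into $\N$ and each value $2^{n}$ lies in exactly one cell $A_{i}$, every $n\in\N$ belongs to exactly one set $2^{-A_{i}}$, so these cells are pairwise disjoint and cover $\N$.

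Next I would apply Theorem \ref{add schur} to this $\log_2$-base partition: there exist an index $i\leq k$ and elements $x,y\in\N$ with $x,y,x+y\in 2^{-A_{i}}$. By the definition of $2^{-A_{i}}$ this means $2^{x},2^{y},2^{x+y}\in A_{i}$. Setting $u=2^{x}$ and $v=2^{y}$, and using $u\cdot v=2^{x}\cdot 2^{y}=2^{x+y}$, I obtain $u,v,u\cdot v\in A_{i}$, that is, a monochromatic copy of the configuration $\{x,y,x\cdot y\}$ with respect to the original partition. As the partition was arbitrary, partition regularity follows.

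I do not expect a genuine obstacle here: all of the combinatorial difficulty is already absorbed into Additive Schur, which we are assuming, and the reduction is exact precisely because exponentiation turns sums into products. The only points requiring minor care are the bookkeeping verification that the $\log_2$-base construction yields a legitimate finite partition of $\N$, and the observation that the witnesses $u=2^{x}$ and $v=2^{y}$ produced in this way are honest elements of $\N$ serving as the two free variables of $\{x,y,x\cdot y\}$.
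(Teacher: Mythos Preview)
Your proof is correct and is precisely the argument given in the paper: apply Additive Schur to the $\log_2$-base partition to obtain monochromatic $x,y,x+y$, then push forward via $n\mapsto 2^{n}$ to get $2^{x},2^{y},2^{x}\cdot 2^{y}$ monochromatic in the original partition. The only addition you make is the routine verification that the $\log_2$-base cells really partition $\N$, which the paper leaves implicit.
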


Equivalently, the multiplicative Schur Theorem states that the equation $xy=z$ is PR. 

Recently, in \cite{key-3}, the question about the partition regularity of $\{x,y,x^{y}\}$, which we will call an exponential triple, was posed and partially answered, by using ultrafilters, by Sisto; its full solution was first given in \cite{key-4} by Sahasrabudhe who, among other generalizations, proved the exponential Schur's Theorem\footnote{At the best of our knowledge, this theorem has not been called explicitly like this in the literature before.}:

\begin{thm}[Exponential Schur's Theorem] The configuration $\{x,y,x^{y}\}$ is PR. \end{thm}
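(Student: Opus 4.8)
The plan is to imitate the passage from additive to multiplicative Schur, applying the $\log_2$-base trick one more time. Given a finite partition $\N = A_1 \cup \ldots \cup A_k$, I would first pass to the associated $\log_2$-base partition $\N = 2^{-A_1} \cup \ldots \cup 2^{-A_k}$, and then to its $\log_2$-base partition again, i.e. to the ``iterated'' partition $\N = 2^{-(2^{-A_1})} \cup \ldots \cup 2^{-(2^{-A_k})}$. By Multiplicative Schur applied to this last partition, there are a cell, say with index $i$, and elements $a, b \in \N$ with $a, b, ab$ all lying in $2^{-(2^{-A_i})}$. Exponentiating base $2$, the numbers $2^a, 2^b, 2^{ab} = (2^a)^b$ all lie in $2^{-A_i}$; exponentiating base $2$ once more, and writing $x = 2^{2^a}$ and $y = 2^b$, we get that $x = 2^{2^a}$, $y = 2^b$, and $x^y = 2^{2^a \cdot 2^b} = 2^{2^{a+b}}$ all lie in $A_i$, provided $a + b = ab$ --- but that is the catch: $2^{2^a} \cdot 2^{2^b} = 2^{2^a + 2^b}$, which is \emph{not} of the form $2^{2^c}$ in general, so a naive double application of the trick does not directly produce an exponential triple.

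The correct route is therefore to apply the trick exactly once, starting from Multiplicative Schur rather than Additive Schur. Given a finite partition $\N = A_1 \cup \ldots \cup A_k$, form the $\log_2$-base partition $\N = 2^{-A_1} \cup \ldots \cup 2^{-A_k}$ and apply the \emph{multiplicative} Schur Theorem to it: there are $i \leq k$ and $a, b \in \N$ with $a, b, ab \in 2^{-A_i}$. Then $2^a, 2^b, 2^{ab} \in A_i$, and since $2^{ab} = (2^a)^b$, setting $x = 2^a$ and $y = b$ we would like to conclude that $x, y, x^y \in A_i$. The obstruction is that $y = b$ need not lie in $A_i$; the multiplicative trick delivers control over $2^b$, not over $b$ itself.

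To repair this, I would look for a monochromatic multiplicative Schur triple $\{a, b, ab\}$ in $2^{-A_i}$ in which the middle element $b$ is additionally constrained to be a power of $2$, say $b = 2^c$; equivalently, I want $c, a, c + \text{(something)}$... more precisely I want $a, 2^c, a \cdot 2^c$ all in $2^{-A_i}$ \emph{and} $2^c \in A_i$. This is exactly the kind of ``combined additive-multiplicative-exponential'' statement whose cleanest proof uses idempotent ultrafilters in $(\bN, \cdot)$ together with the interaction between the doubling map and ultrafilter operations --- which is presumably why Sahasrabudhe's proof (and Sisto's partial result) goes through ultrafilters, and why the present paper sets up a theory of exponentiations of ultrafilters. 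Concretely, one fixes a multiplicative idempotent $p \in \bN$; for any finite coloring, some color class $A_i$ is in the pushed-forward ultrafilter, its preimage under doubling sits in an appropriate ultrafilter, and the Galvin--Glazer style argument inside that ultrafilter yields $x = 2^a$ with $x \in A_i$ and $y \in A_i$ with $x^y \in A_i$ simultaneously. I expect the main obstacle to be precisely this synchronization: ensuring that the \emph{same} color class captures $x$, $y$, and $x^y$ at once, since the three are governed by three different algebraic operations (a value, an exponent sitting one logarithm down, and an exponential), and it is the ultrafilter machinery --- not a bare iteration of the $\log_2$ trick --- that forces them to agree.
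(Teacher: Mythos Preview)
Your exploration lands on the right reduction: after one $\log_2$-base change, exponential Schur for $A_i$ becomes the statement that $B_i:=2^{-A_i}$ contains a configuration $\{a,c,a\cdot 2^{c}\}$ (your condition ``$2^{c}\in 2^{-A_i}$'' is superfluous; what you actually need is $c\in 2^{-A_i}$, i.e.\ $2^{c}\in A_i$, together with $a,a\cdot 2^{c}\in 2^{-A_i}$). This is precisely the Di~Nasso--Ragosta route the paper singles out: first prove that $\{x,y,2^{x}y\}$ is PR, then lift via $n\mapsto 2^{n}$.

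The gap is that you never prove that intermediate statement. Your final paragraph replaces the argument with a gesture: ``fix a multiplicative idempotent $p$ and run Galvin--Glazer.'' That is not enough, and it is also not the right mechanism. Galvin--Glazer from a multiplicative idempotent produces FP-sets, which do not obviously contain $\{x,y,2^{x}y\}$; there is no reason a finite product $\prod_{i\in F}x_i$ should equal $2^{x}y$ for two other finite products $x,y$. The actual proof (as the paper records, following \cite{key-5}) needs the \emph{Van der Waerden} property of $p$, not mere idempotency: one takes $p$ such that every member of $p$ contains $\{b,c,b+mc\}$ for each fixed $m$, forms the ultrafilter $2^{p}\odot p$, and for $A\in 2^{p}\odot p$ picks $a$ with $A/2^{a}\in p$, then uses the VdW property with $m=2^{a}$ inside $A/2^{a}$ to find $b,c,b+2^{a}c$ there; setting $x=2^{a}c$, $y=2^{b}d$ for a suitable $d$ yields $x,y,2^{x}y\in A$. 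Only after this is established does the $\log_2$ trick finish the job, giving $2^{2^{p}\odot p}\models\{x,y,x^{y}\}$.

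So your diagnosis of the obstacle (synchronizing $x$, $y$, $x^{y}$ across the three ``levels'') is accurate, and your reduction matches the paper's, but the missing piece is exactly the nontrivial one: a concrete ultrafilter argument, hinging on Van der Waerden rather than on multiplicative idempotency, that witnesses the PR of $\{x,y,2^{x}y\}$.
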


Again, the above theorem can be rephrased in terms of equations: it states that $x^{y}=z$ is PR. Sahasrabudhe's proof used the compactness property on the convergence of subsequences in the space of $k$-partitions of $\N$ and repeated applications of van der Waerden's Theorem:

\begin{thm}[Van der Waerden's Theorem] For all $k\in\N$ the configuration $\{x,x+y,\ldots,x+ky\}$ is PR. \end{thm}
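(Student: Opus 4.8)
The plan is to deduce the stated partition-regular form from the finite Van der Waerden theorem, namely: \emph{for all $k,r\in\N$ there is an $N=W(k,r)\in\N$ such that every $r$-colouring of $\{1,\dots,N\}$ contains $x,y\in\N$ with $\{x,x+y,\dots,x+ky\}\subseteq\{1,\dots,N\}$ monochromatic}. The reduction is immediate: given a finite partition $\N=A_{1}\cup\dots\cup A_{r}$, restrict the associated colouring to $\{1,\dots,W(k,r)\}$. (Conversely the finite form follows from the infinite one by a routine compactness argument, so the two statements are equivalent and it suffices to prove either.)

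I would prove the finite form by induction on $k$. The case $k=1$ is the pigeonhole principle: $W(1,r)=r+1$ works. For the inductive step, assuming $W(k,\cdot)$ has been constructed, I obtain $W(k+1,\cdot)$ through a secondary induction built on the notion of \emph{colour-focused progressions}. Call arithmetic progressions $A_{1},\dots,A_{s}$, where $A_{i}=\{a_{i},a_{i}+d_{i},\dots,a_{i}+kd_{i}\}$, \emph{colour-focused at} $f$ if each $A_{i}$ is monochromatic, the $s$ colours occurring are pairwise distinct, and $a_{i}+(k+1)d_{i}=f$ for every $i$. The sub-claim is: for every $s\in\{1,\dots,r\}$ there is an $n(s)$ such that every $r$-colouring of an interval of $n(s)$ consecutive integers contains \emph{either} a monochromatic $(k+2)$-term progression \emph{or} a family of $s$ colour-focused $(k+1)$-term progressions whose common focus also lies in the interval. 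Granting the sub-claim, the case $s=r$ finishes the induction: the $r$ colour-focused progressions $A_{1},\dots,A_{r}$ share a focus $f$, whose colour coincides with that of some $A_{i}$, and then $A_{i}\cup\{f\}$ is a monochromatic $(k+2)$-term progression; so $W(k+1,r):=n(r)$ works.

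The sub-claim is itself proved by induction on $s$. The case $s=1$ is the outer hypothesis $W(k,r)$ applied in a slightly enlarged interval so that the focus of the resulting $(k+1)$-term progression is captured; one may take $n(1)=2W(k,r)$. For the step $s\to s+1$, set $m=n(s)$, work inside an interval of length $2m\cdot W(k,r^{m})$, and cut it into $2W(k,r^{m})$ consecutive blocks of length $m$. Colour each block by its internal colour-pattern (at most $r^{m}$ of them) and apply the outer hypothesis $W(k,r^{m})$ to the first $W(k,r^{m})$ blocks: this produces a $(k+1)$-term arithmetic progression of blocks $\mathcal{B}_{p},\mathcal{B}_{p+g},\dots,\mathcal{B}_{p+kg}$ all bearing the same pattern; write $\delta$ for the integer offset between consecutive blocks of this progression. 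Inside $\mathcal{B}_{p}$ the inner hypothesis yields $s$ progressions $A_{1},\dots,A_{s}$ colour-focused at some $f\in\mathcal{B}_{p}$, with colours $c_{1},\dots,c_{s}$. Replacing each common difference $d_{i}$ by $d_{i}+\delta$ gives a new $(k+1)$-term progression whose $j$-th term occupies the same in-block position in $\mathcal{B}_{p+jg}$ as the $j$-th term of $A_{i}$ does in $\mathcal{B}_{p}$, hence still has colour $c_{i}$, and whose focus is $f+(k+1)\delta$; similarly $\{f,f+\delta,\dots,f+k\delta\}$ is monochromatic in the colour $c_{0}$ of $f$, with the same focus $f+(k+1)\delta$. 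These $s+1$ progressions are colour-focused at $f+(k+1)\delta$ unless $c_{0}=c_{i}$ for some $i$, in which case $A_{i}\cup\{f\}$ is already a monochromatic $(k+2)$-term progression. A quick check that every point used lies in the interval pins down the recursion $n(s+1)=2m\cdot W(k,r^{m})$, and hence a tower-type bound for $W(k+1,r)=n(r)$.

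The main obstacle is exactly this last step: arranging the two-scale block decomposition, invoking the outer hypothesis a second time to get a pattern-monochromatic progression \emph{of blocks}, and then verifying that the ``stretched'' progressions stay monochromatic and in range, so that the count of colour-focused progressions genuinely rises by one. Everything else is routine bookkeeping of the iterated bounds. I would also remark that the theorem can alternatively be derived from the Hales--Jewett theorem --- via the map $\{0,1,\dots,k\}^{n}\to\N$, $\vec{x}\mapsto x_{1}+\dots+x_{n}$, under which combinatorial lines become $(k+1)$-term arithmetic progressions --- or, in the spirit of the present paper, from the fact that members of minimal idempotent ultrafilters in $(\bN,+)$ contain arbitrarily long arithmetic progressions.
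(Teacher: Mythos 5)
The paper states Van der Waerden's Theorem without proof: it is quoted as classical background (an ingredient of Sahasrabudhe's argument), so there is no in-paper argument to compare yours against. Your write-up is the standard Graham--Rothschild-style proof by double induction on colour-focused progressions, and it is correct. The outer induction on $k$ (with $\{x,x+y,\dots,x+ky\}$ a $(k+1)$-term progression, base case pigeonhole) and the inner induction on the number $s$ of colour-focused progressions are set up consistently; the one place that usually needs care --- ensuring the common focus of the stretched progressions stays in range --- is handled correctly because you build ``the focus lies in the interval'' into the statement of the sub-claim, which is exactly what makes blocks of length $m=n(s)$ (rather than $2m$) suffice and yields the bound $n(s+1)=2m\cdot W(k,r^{m})$: the focus $f+(k+1)\delta$ with $\delta=mg$ and $p+kg\le W(k,r^{m})$ indeed lands in the first $2W(k,r^{m})$ blocks. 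The reduction from the infinite partition statement to the finite form $W(k,r)$ is immediate as you say, and the converse compactness direction is not needed. Of the alternative derivations you mention, the one via minimal additive idempotents (every member of an ultrafilter in $\overline{K(\beta\N,\oplus)}$ contains arbitrarily long arithmetic progressions) is the closest in spirit to this paper, which indeed invokes the ultrafilter formulation when it speaks of ``Van der Waerden witnesses''; your elementary proof buys self-containedness and explicit (tower-type) bounds at the cost of the heavier bookkeeping.
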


One of the key ingredients in Sahasrabudhe's proof was the use of the function $f(n,m):=2^{n}m$, combined with associated $\log_{2}$-base partitions. Starting from observations about this function, in \cite[Theorem 2.1]{key-5} Di Nasso and Ragosta gave a very simple and elegant ultrafilters proof of the partition regularity of the configuration $\{x,y,2^{x}y\}$, from which the exponential Schur's theorem can be immediately deduced by the same $\log_{2}$-partition trick that allows proving the multiplicative Schur's Theorem from the additive one (see \cite[Corollary 2.3]{key-5}). Their proof was based on tensor products of ultrafilters\footnote{We will recall some basic facts and definitions regarding ultrafilters in Section \ref{prel}, but we assume the reader to have a basic knowledge of the main properties of $\beta\N$; we refer to the monograph \cite{key-22} for a detailed presentation of ultrafilters and their applications to combinatorics.} and the extension to $\left(\beta\N\right)^{2}$ of the same function $f(n,m)=2^{n}m$ used by Sahasrabudhe. Di Nasso and Ragosta used ultrafilters methods also to study the partition regularity of infinite exponential configurations in \cite{key-6}. As it is often the case with partition regularity problems, the ultrafilters proof allows for several generalizations: this is a common situation,  as proven by an enormous literature on the subject (instead of citing here hundreds of papers, we refer to \cite{key-22} and its bibliography). 

Our main goal is to start a systematic study of which ultrafilters are related to exponential triples, and what other combinatorial properties such ultrafilters have. This will be done by introducing, in Section \ref{expul}, two different exponentiations of ultrafilters, named $E_{1},E_{2}$, and by studying their algebraic properties and their connections with the exponential Schur's Theorem. In Section \ref{tr} we study the existence of idempotents for $E_{1},E_{2}$, and prove that $E_{1}$ does not admit nonprincipal idempotents, whilst $E_{2}$ does not admit nonprincipal idempotents that are simultaneously multiplicative idempotents.  The novelty of most of our results generates a large number of open questions and future possible directions. We conclude the paper with a discussion of many of these in Section \ref{questions}.

\section{Exponentiations of ultrafilters}  \label{expul}

\subsection{Preliminaries}\label{prel}

As ultrafilters are the key notion of this paper, we assume the reader to have a basic knowledge of the algebra of $\beta\N$. We just briefly recall a few definitions, mostly so that we can fix some notations.

Sums and products of ultrafilters will be denoted by $\oplus,\odot$ respectively. They are related to tensor products of ultrafilters which, over $\N$, are defined as follows:

\begin{defn} Given $p,q\in\beta\N$, the tensor product $p\otimes q$ of $p,q$ is the ultrafilter on $\beta(\N^{2})$ defined as follows: for all $A\subseteq\N^{2}$ 
\[A\in p\otimes q\Leftrightarrow\{a\in\N\mid \{b\in\N\mid (a,b)\in A\}\in q\}\in p.\]\end{defn}

Moreover, we will need to talk about extensions of functions to $\beta\N$: for $f:\N^{n}\rightarrow \N$, we will denote by $\widehat{f}$ its unique continuous extension $\widehat{f}:\beta(\N^{n})\rightarrow \beta\N$. As usual, to simplify notations we identify each natural number $n\in\N$ with the principal idempotent it generates, so that $\N\subseteq\beta\N$, and we convene that, for $a\in\N, p\in\beta\N$, we will write $ap$ instead of $a\odot p$. As in Proposition \ref{alg exp} we will use ultrafilters over $\R^{+}_{d}$, we recall that $\beta\R^{+}_{d}$ is the Stone-\v{C}ech compactification of $\R^{+}:=\{x\in \R\mid x>0\}$ endowed with the discrete topology. We adopt for $\beta\R^{+}_{d}$ the analogous notational conventions we are using for $\beta\N$.  

As well-known, ultrafilters are often a fundamental tool to study PR problems. For configurations, the connection between ultrafilters and partition regularity reads as follows (see e.g. \cite[Theorem 5.7]{key-22}):

\begin{fact} Let $f_{1}\left(x_{1},\ldots,x_{n}\right),\ldots,f_{m}\left(x_{1},\ldots,x_{n}\right):\N^{n}\rightarrow \N$. The following are equivalent:
\begin{enumerate}
\item $\{f_{1}\left(x_{1},\ldots,x_{n}\right),\ldots,f_{m}\left(x_{1},\ldots,x_{n}\right)\}$ is PR;
\item $\exists p\in\beta\N\ \forall A\in p \ \exists x_{1},\ldots,x_{n}\in \N\ f_{1}\left(x_{1},\ldots,x_{n}\right),\ldots,f_{m}\left(x_{1},\ldots,x_{n}\right)\in A$.
\end{enumerate}
\end{fact}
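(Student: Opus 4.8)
The plan is to prove the two implications separately, with the real content in $(1)\Rightarrow(2)$ via a standard finite-intersection-property (compactness) argument in $\beta\N$.

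For $(2)\Rightarrow(1)$ I would argue directly. Fix $p\in\beta\N$ as in $(2)$ and let $\N=A_{1}\cup\ldots\cup A_{k}$ be a finite partition. Since $p$ is an ultrafilter, there is exactly one $i\leq k$ with $A_{i}\in p$; applying $(2)$ to $A=A_{i}$ produces $x_{1},\ldots,x_{n}\in\N$ with $f_{j}\left(x_{1},\ldots,x_{n}\right)\in A_{i}$ for every $j\leq m$, which is exactly what partition regularity demands.

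For $(1)\Rightarrow(2)$, call $A\subseteq\N$ \emph{good} if there are $x_{1},\ldots,x_{n}\in\N$ with $f_{j}\left(x_{1},\ldots,x_{n}\right)\in A$ for all $j\leq m$; what we want is an ultrafilter all of whose members are good, equivalently an ultrafilter containing the family $\mathcal{G}=\{\N\setminus A\mid A\subseteq\N\text{ is not good}\}$. Since $\emptyset$ is not good we have $\N\in\mathcal{G}$, so $\mathcal{G}\neq\emptyset$, and by the ultrafilter extension theorem it suffices to check that $\mathcal{G}$ has the finite intersection property. Suppose then that $A_{1},\ldots,A_{k}\subseteq\N$ are not good and $(\N\setminus A_{1})\cap\ldots\cap(\N\setminus A_{k})=\emptyset$, i.e.\ $A_{1}\cup\ldots\cup A_{k}=\N$. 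Disjointify by setting $B_{1}=A_{1}$ and $B_{i}=A_{i}\setminus(A_{1}\cup\ldots\cup A_{i-1})$ for $i\geq 2$, so that $\N=B_{1}\cup\ldots\cup B_{k}$ is a partition with $B_{i}\subseteq A_{i}$; by partition regularity of the configuration there are $i\leq k$ and $x_{1},\ldots,x_{n}$ with $f_{j}\left(x_{1},\ldots,x_{n}\right)\in B_{i}\subseteq A_{i}$ for all $j\leq m$, so $A_{i}$ is good, a contradiction. Hence $\mathcal{G}$ extends to an ultrafilter $p$, and if some $A\in p$ were not good then $\N\setminus A\in\mathcal{G}\subseteq p$ together with $A\in p$ would be absurd; thus every $A\in p$ is good, which is $(2)$.

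The argument is entirely routine; the only step requiring a little attention is the passage from the finite \emph{cover} $A_{1}\cup\ldots\cup A_{k}=\N$, obtained by negating the finite intersection property, to a genuine finite \emph{partition} of $\N$ so that the hypothesis of partition regularity applies verbatim. This is exactly what the disjointification $B_{i}=A_{i}\setminus\bigcup_{l<i}A_{l}$ achieves, using that each $B_{i}$ is contained in the corresponding $A_{i}$.
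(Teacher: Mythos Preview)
Your argument is correct and is the standard compactness proof of this equivalence. Note, however, that the paper does not supply its own proof of this statement: it records it as a \emph{Fact} with a reference to \cite[Theorem~5.7]{key-22}, so there is nothing in the paper to compare against beyond observing that your proof is precisely the classical one underlying that reference.
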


Analogously, for systems the following holds:

\begin{fact} Let $f_{1}\left(x_{1},\ldots,x_{n}\right),\ldots,f_{m}\left(x_{1},\ldots,x_{n}\right):\N^{n}\rightarrow \omega$, and let the system $\sigma\left(x_{1},\ldots,x_{n}\right)$ be defined by \[\begin{cases} f_{1}\left(x_{1},\ldots,x_{n}\right)=0,\\ \ \ \ \ \ \ \ \ \ \vdots \\ f_{m}\left(x_{1},\ldots,x_{n}\right)=0.\end{cases}\] The following are equivalent:
\begin{enumerate}
\item $\sigma\left(x_{1},\ldots,x_{n}\right)=\vec{0}$ is PR;
\item $\exists p\in\beta\N\ \forall A\in p \ \exists x_{1},\ldots,x_{n}\in A\ \sigma\left(x_{1},\ldots,x_{n}\right)=\vec{0}$.
\end{enumerate}
\end{fact}

The above facts lead to the following definitions:

\begin{defn} Let $p\in\beta\N$ and let $f_{1}\left(x_{1},\ldots,x_{n}\right),\ldots,f_{m}\left(x_{1},\ldots,x_{n}\right):\N^{n}\rightarrow \N$. We say that $p$ witnesses the partition regularity of the configuration $\{f_{1}\left(x_{1},\ldots,x_{n}\right),\ldots,f_{m}\left(x_{1},\ldots,x_{n}\right)\}$ if for all $A\in p$ there exist $x_{1},\ldots,x_{n}\in \N$ such that $f\left(x_{1},\ldots,x_{n}\right)\in A$. In this case, we write $p\models \{f_{1}\left(x_{1},\ldots,x_{n}\right),\ldots,f_{m}\left(x_{1},\ldots,x_{n}\right)\}$. Analogously, if the system $\sigma\left(x_{1},\ldots,x_{n}\right)$ is defined by \[\begin{cases} f_{1}\left(x_{1},\ldots,x_{n}\right)=0,\\ \ \ \ \ \ \ \ \ \ \vdots \\ f_{m}\left(x_{1},\ldots,x_{n}\right)=0\end{cases}\] we say that $p$ witnesses the partition regularity of $\sigma\left(x_{1},\ldots,x_{n}\right)=\vec{0}$ if $\forall A\in p \ \exists x_{1},\ldots,x_{n}\in A\ \sigma\left(x_{1},\ldots,x_{n}\right)=\vec{0}$. In this case, we write $p\models \sigma\left(x_{1},\dots,x_{n}\right)=\vec{0}$.\end{defn}

In the next sections we will use a few specific results from other papers. For the readers' easiness, we recall them here. The first states that when the PR of different systems of equations are witnessed by the same ultrafilter, these configurations can be mixed.

\begin{lem}{\cite[Lemma 2.1]{advances}}\label{old1} Let $p\in\beta\N$, and let $f\left(x_{1},\ldots,x_{n}\right),g\left(y_{1},\ldots,y_{m}\right)$ be two functions such that $p\models \left(f\left(x_{1},\ldots,x_{n}\right)=0\right)\wedge\left(g\left(y_{1},\ldots,y_{m}\right)=0\right)$. Then $p\models\left(f\left(x_{1},\ldots,x_{n}\right)=0\right)\wedge\left(g\left(y_{1},\ldots,y_{m}\right)=0\right)\wedge\left(x_{1}=y_{1}\right)$.\end{lem}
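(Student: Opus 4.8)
The plan is to argue directly: fix an arbitrary $A\in p$ and produce, inside $A$, a single tuple solving $f=0$, $g=0$ with equal first coordinates. The one idea needed is that, for each equation separately, the set of values of its first variable that can be completed to a solution lying entirely in $A$ is again a member of $p$.

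First I would record the trivial reduction that $p\models\left(f\left(x_{1},\ldots,x_{n}\right)=0\right)\wedge\left(g\left(y_{1},\ldots,y_{m}\right)=0\right)$ implies both $p\models f\left(x_{1},\ldots,x_{n}\right)=0$ and $p\models g\left(y_{1},\ldots,y_{m}\right)=0$, since forgetting the $y$-variables (resp.\ the $x$-variables) of a witnessing tuple leaves a witnessing tuple for the single equation. Then, given $A\in p$, I would set
\[
S_{f}=\{a\in A\mid \exists\, x_{2},\ldots,x_{n}\in A,\ f(a,x_{2},\ldots,x_{n})=0\},\qquad
S_{g}=\{b\in A\mid \exists\, y_{2},\ldots,y_{m}\in A,\ g(b,y_{2},\ldots,y_{m})=0\}.
\]
The key step is the claim $S_{f}\in p$: if not, then $A\setminus S_{f}\in p$, and applying $p\models f=0$ to the set $A\setminus S_{f}$ yields $x_{1},\ldots,x_{n}\in A\setminus S_{f}$ with $f(x_{1},\ldots,x_{n})=0$; but then all of $x_{1},\ldots,x_{n}$ lie in $A$, so the witnesses $x_{2},\ldots,x_{n}$ show $x_{1}\in S_{f}$, contradicting $x_{1}\in A\setminus S_{f}$. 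The same argument gives $S_{g}\in p$.

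To finish, since $p$ is a filter we get $S_{f}\cap S_{g}\in p$, hence $S_{f}\cap S_{g}\neq\emptyset$; choosing $c$ in this intersection and unwinding the definitions produces $x_{2},\ldots,x_{n}\in A$ with $f(c,x_{2},\ldots,x_{n})=0$ and $y_{2},\ldots,y_{m}\in A$ with $g(c,y_{2},\ldots,y_{m})=0$, together with $c\in A$. Setting $x_{1}=y_{1}=c$ exhibits $x_{1},\ldots,x_{n},y_{1},\ldots,y_{m}\in A$ solving $f=0$, $g=0$ and $x_{1}=y_{1}$; as $A\in p$ was arbitrary, this is precisely $p\models\left(f=0\right)\wedge\left(g=0\right)\wedge\left(x_{1}=y_{1}\right)$. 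I do not expect a real obstacle: the only point requiring care is keeping track of the fact that \emph{every} coordinate of a witnessing tuple, including the first, must lie in $A$ — this is exactly what makes both $S_{f}\in p$ and the final $c\in A$ come out correctly. The same scheme works verbatim for any designated variable of $f$ and of $g$ in place of $x_{1}$ and $y_{1}$.
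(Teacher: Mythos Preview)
Your argument is correct. The paper does not give its own proof of this lemma; it is quoted verbatim from \cite[Lemma~2.1]{advances} and used as a black box, so there is nothing in the present paper to compare against. Your proof is in fact the standard one (and essentially the one in the cited reference): the observation that the set $S_{f}$ of admissible first coordinates must lie in $p$, via the contradiction obtained by applying $p\models f=0$ to $A\setminus S_{f}$, is exactly the right idea, and intersecting $S_{f}$ with $S_{g}$ finishes it.
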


The second result that we will need is \cite[Theorem 1.6]{GP}, which states that multiplicatively piecewise syndetic sets contains some exponential structure.

\begin{thm}{\cite[Theorem 1.6]{GP}}\label{GosPa} Let $n\in\mathbb{N}, n>1$ and let $A\subseteq\N$ be multiplicatively piecewise syndetic. Then there exists $x,y\in A$ such that $xn^{y}\in A$. \end{thm}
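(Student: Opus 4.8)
The plan is to read this as a largeness strengthening of the (base-$n$) Di Nasso--Ragosta configuration $\{x,y,n^{y}x\}$ from \cite{key-5} -- their $\{u,v,2^{u}v\}$ with $2$ replaced by $n$, $u$ playing the role of our exponent $y$ and $v$ that of our coefficient $x$ -- and to re-run their tensor-product argument while forcing the witnessing ultrafilter to lie in the smallest ideal $K(\beta\N,\odot)$. The point of the latter is the standard characterization (see \cite{key-22}): a set $A\subseteq\N$ is multiplicatively piecewise syndetic if and only if $A$ belongs to some $q\in K(\beta\N,\odot)$. So it suffices to produce one ultrafilter $q\in K(\beta\N,\odot)$ that ``models'' $\{x,y,n^{y}x\}$, i.e. such that for every $A\in q$ there are $x,y\in A$ with $xn^{y}\in A$; applying this to an arbitrary multiplicatively piecewise syndetic $A$ and a minimal $q$ with $A\in q$ then gives the theorem.

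To build such a $q$, first I would use that $\psi_{n}\colon(\N,+)\to(\N,\cdot)$, $k\mapsto n^{k}$, is a semigroup homomorphism, so $\widehat{\psi_{n}}$ sends idempotents of $(\beta\N,\oplus)$ to idempotents of $(\beta\N,\odot)$ supported on the powers of $n$. Pick a minimal additive idempotent $v$, set $\eta:=\widehat{\psi_{n}}(v)$, and then choose a minimal multiplicative idempotent $e$ in the minimal left ideal of $(\beta\N,\odot)$ determined by $\eta$, arranged so that $\eta\odot e=e$; put $q:=e\in K(\beta\N,\odot)$. Given $A\in e$, the identity $e=\eta\odot e$ unwinds to $B:=\{k\in\N:(n^{k})^{-1}A\in e\}\in v$. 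Now run the usual ``pass to stars of an idempotent'' routine: shrink $A$ to its $e$-star $A^{\star}$ and $B$ to its $v$-star $B^{\star}$, use idempotency of $v$ to locate $y\in B^{\star}$, then idempotency of $e$ together with $(n^{y})^{-1}A^{\star}\in e$ and $A^{\star}\in e$ to locate $x\in A^{\star}\cap(n^{y})^{-1}A^{\star}$; this gives $x,n^{y}x\in A$. The remaining work is to also secure $y\in A$: this is where the link $\eta\odot e=e$ together with $A\in e$ must be exploited, so that the exponents $y$ produced by $v$ can be forced into $A$ (equivalently, into $A^{\star}$) rather than merely witnessing $n^{y}x\in A$.

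The main obstacle is exactly this compatibility bookkeeping. One needs simultaneously: (i) $q$ minimal in $(\beta\N,\odot)$, so that it meets an arbitrary multiplicatively piecewise syndetic set; (ii) enough additive structure on $\langle n\rangle$ attached to $q$ to extract the exponent, which wants an additive idempotent $v$ with $\widehat{\psi_{n}}(v)$ algebraically tied to $q$; and (iii) a way to pull the exponent $y$ itself into $A$. The tension is that $\widehat{\psi_{n}}(v)$ is a priori minimal only inside the closed subsemigroup $\overline{\langle n\rangle}$, not inside $\beta\N$, and that membership in an additive idempotent and in a multiplicative idempotent are unrelated in general; making (i)--(iii) coexist -- choosing $v$, $e$ with $\eta=\widehat{\psi_{n}}(v)$, $\eta\odot e=e$, $e$ minimal, and still controlling which sets are $e$-large -- is the delicate step. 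Once that alignment is in place, the rest is the routine idempotent-star manipulation sketched above.
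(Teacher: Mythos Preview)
First, a framing remark: the paper does not prove Theorem~\ref{GosPa} at all; it is quoted from \cite{GP} as a black box and only used later (via \cite[Theorem~4.40]{key-22}) to deduce Corollary~2.18. So there is no ``paper's own proof'' to compare your attempt against.

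That said, your proposal has a genuine logical gap at the very first reduction. You write that ``it suffices to produce one ultrafilter $q\in K(\beta\N,\odot)$ that models $\{x,y,n^{y}x\}$'' and then ``apply this to an arbitrary multiplicatively piecewise syndetic $A$ and a minimal $q$ with $A\in q$.'' These two sentences are incompatible. The $q$ you go on to construct is one \emph{specific} minimal idempotent $e$ (built from a fixed minimal additive idempotent $v$ via $\eta=\widehat{\psi_{n}}(v)$ and a choice of minimal left ideal), and an arbitrary multiplicatively piecewise syndetic $A$ has no reason to belong to this particular $e$. Exhibiting a single witnessing ultrafilter in $K(\beta\N,\odot)$ only reproves the partition regularity of $\{x,y,xn^{y}\}$; it does \emph{not} yield the piecewise-syndetic strengthening. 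What the theorem actually requires is that \emph{every} $q\in K(\beta\N,\odot)$ models the configuration, and your outline makes no move in that direction.

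Even ignoring that error, the second gap is one you flag yourself but never close: forcing the exponent $y$ into $A$, not merely obtaining $x,xn^{y}\in A$. Your sketch stops exactly where this must be done. In the Di Nasso--Ragosta argument (and in the paper's own Theorem~2.14) the exponent lands in $A$ because the witnessing ultrafilter has the shape $n^{p}\odot p$ and one exploits van der Waerden structure in $p$; your $e$ is a minimal multiplicative idempotent with $\eta\odot e=e$, and nothing in the outline explains why $y$ chosen from the $v$-star of $B=\{k:(n^{k})^{-1}A\in e\}$ should lie in $A$. Absent that link the argument, even for your specific $e$, is incomplete.
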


The third and fourth result we need are \cite[Theorem A]{mal} and (the restriction to $\N$ of) \cite[Theorem 13.14]{key-22}\footnote{That we slightly rephrase to adapt to our notations.}, that states that certain ultrafilter equations admit no solution.

\begin{thm}{\cite[Theorem A]{mal}}\label{TMal} Let $p\in\beta\N\setminus \N$, $a, b\in\N$ and $u,v\in\beta\N$. Suppose that $u\oplus ap = v \oplus bp$. Then $a = b$.\end{thm}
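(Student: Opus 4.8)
The plan is to prove the contrapositive: assuming $a\neq b$, say $a<b$, I will show that $u\oplus ap\neq v\oplus bp$ for \emph{every} $u,v\in\bN$, using only that $p$ is nonprincipal. The first step is to recast the property ``$q$ is of the form $w\oplus ap$'' topologically. Since the right translation $\rho_{ap}\colon w\mapsto w\oplus ap$ is continuous on $\bN$, its image is the compact set $\overline{\{n+ap:n\in\N\}}$, and $q$ lies in it precisely when for every $A\in q$ there is $n\in\N$ with $\{m\in\N:\, am+n\in A\}\in p$. Call $A\subseteq\N$ \emph{$a$-bad} if $\{m:\, am+n\in A\}\notin p$ for all $n\in\N$, and similarly \emph{$b$-bad}. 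Then $q=u\oplus ap$ forces $q$ to contain no $a$-bad set, while $q=v\oplus bp$ forces $q$ to contain no $b$-bad set; so it suffices to prove that every $q\in\bN$ contains an $a$-bad set or a $b$-bad set. By compactness of $\bN$ (the clopen sets $\overline A$, $A$ an $a$-bad or $b$-bad set, cover $\bN$ iff finitely many do) this is in turn equivalent to exhibiting finitely many sets, each $a$-bad or $b$-bad, whose union is $\N$.

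To build such a cover I would use a multiplicative block decomposition tuned so that multiplication by $a$ and by $b$ ``land'' in well-separated blocks. Fix a large integer $K$, put $c=b^{1/K}$, and let block $k$ be $[c^{k},c^{k+1})$ for $k\geq 0$; colour block $k$ by $k\bmod M$ with $M=K+2$, obtaining classes $A_{0},\dots,A_{M-1}$ that partition $\N$, and let $A_{j_{0}}$ be the unique one lying in $p$. Multiplication by $b$ shifts a block by exactly $K$ (since $b=c^{K}$), so for each fixed $n$ the set $\{m:\, bm+n\in A_{j}\}$ is, apart from finitely many small $m$, contained in $A_{j-K}\cup A_{j-K-1}$ (indices mod $M$); hence $A_{j}$ is $b$-bad unless $j\in\{j_{0}+K,\ j_{0}+K+1\}\pmod M$. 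Multiplication by $a=c^{\alpha}$ with $\alpha=\log_{c}a<K$ moves a block into at most two consecutive blocks, and the further shift by $n$ spreads this over at most three, so $A_{j}$ is $a$-bad unless $j\in\{j_{0}+\lfloor\alpha\rfloor,\ j_{0}+\lfloor\alpha\rfloor+1,\ j_{0}+\lfloor\alpha\rfloor+2\}\pmod M$ (and only the two indices $j_{0},j_{0}+1$ when $a=1$). Since $K-\alpha=K\ln(b/a)/\ln b\to\infty$, choosing $K$ large forces $\lfloor\alpha\rfloor+2<K$, so the two exceptional index sets are disjoint modulo $M$; therefore every $A_{j}$ is $a$-bad or $b$-bad, and the $A_{j}$ together cover $\N$, as required. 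Nonprincipality of $p$ enters exactly here: it is what makes the ``finitely many small $m$'' negligible, i.e.\ not in $p$.

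The crux — and the reason a naive attempt to separate $u\oplus ap$ from $v\oplus bp$ by a single sparse set fails — is that $u$ and $v$ are completely arbitrary while $p$ need not contain any sparse set at all; the resolution is to push all the sparseness into the auxiliary boundaries $c^{k}$, which we are free to choose, and to measure $a$ and $b$ on the scale $\log_{c}$, where the inequality $\log_{c}a<K=\log_{c}b$ supplies the separation between the two bands of ``reached'' blocks. The remaining work is bookkeeping: verifying the two elementary block-shift estimates above, choosing $K$ (hence $M=K+2$) large enough that the two exceptional index sets neither overlap nor wrap around modulo $M$, and checking the degenerate case $a=1$ separately. I expect the only genuinely delicate point to be getting these index sets provably disjoint mod $M$ uniformly in the unknown value $j_{0}$; everything else is routine.
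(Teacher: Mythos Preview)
The paper does not contain a proof of this statement: Theorem~\ref{TMal} is quoted from \cite{mal} and invoked as a black box in the proof of Proposition~\ref{neqr}. There is therefore nothing in the present paper to compare your argument against.

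For what it is worth, your sketch is correct. The reformulation ``$q=u\oplus ap$ for some $u$ iff no member of $q$ is $a$-bad'' is right: continuity of $w\mapsto w\oplus ap$ gives $\{u\oplus ap:u\in\bN\}=\overline{\{n+ap:n\in\N\}}$, and membership of $q$ in that closure is exactly your stated condition. The logarithmic block colouring $m\mapsto\lfloor\log_{c}m\rfloor\bmod M$ with $c=b^{1/K}$ does what you claim: for $m$ in block $k$, one has $bm+n$ in block $k+K$ or $k+K+1$, and $am+n$ in one of the blocks $k+\lfloor\alpha\rfloor$, $k+\lfloor\alpha\rfloor+1$, $k+\lfloor\alpha\rfloor+2$, in both cases except for the finitely many $m$ with $k$ so small that $n\geq c^{k+K+1}(c-1)$; nonprincipality of $p$ discards those. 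The disjointness of the two exceptional residue sets modulo $M=K+2$ reduces to the $j_{0}$-free condition $\{\lfloor\alpha\rfloor,\lfloor\alpha\rfloor+1,\lfloor\alpha\rfloor+2\}\cap\{K,K+1\}=\emptyset$ in $\Z/M\Z$, which holds once $\lfloor\alpha\rfloor+2<K$; since $K-\alpha=K\log(b/a)/\log b\to\infty$ this is secured by taking $K$ large. Your allowance of three exceptional residues for $a$ is slightly generous (two suffice after the same finite exception), but this does no harm.
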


\begin{thm}{\cite[Theorem 13.14]{key-22}}\label{HSsp} Let $p, q, r, s\in\beta\N\setminus\N$. If $|\{a\in\N\mid a\Z\in r\}|=|\N|$, then $q\oplus p\neq s\odot r$.\end{thm}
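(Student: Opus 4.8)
The plan is to argue by contradiction. Suppose $q\oplus p=s\odot r$ and put $D:=\{a\in\N\mid a\N\in r\}$; this set is closed under divisors, since $b\mid a$ and $a\N\in r$ force $b\N\supseteq a\N\in r$. As $D$ is infinite, an elementary argument on the divisibility order shows that \emph{either} $D\supseteq\{\pi^{n}\mid n\in\N\}$ for some prime $\pi$, \emph{or} $D$ contains infinitely many distinct primes. I would treat the prime-power case in detail and note that the other case is handled by the same scheme: either one of the primes $\pi\in D$ already has $\widehat{v_{\pi}}(r)$ nonprincipal, reducing us to the first case, or $r$ is obtained by multiplying fixed finite powers of infinitely many distinct primes, which admits the same descent below.

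Next I would record the elementary consequences of the equation. If $a\N\in r$ then $\{y\mid xy\in a\N\}\supseteq a\N\in r$ for every $x$, so $a\N\in s\odot r=q\oplus p$; and unwinding the definition of $\oplus$ shows that $a\N\in q\oplus p$ exactly when the (well-defined) residues $\overline{p}_{a},\overline{q}_{a}$ of $p,q$ modulo $a$ satisfy $\overline{p}_{a}+\overline{q}_{a}\equiv 0\pmod a$. Hence $p$ and $q$ are ``opposite modulo $a$'' for every $a\in D$, and, letting $a$ run through the prime powers $\pi^{n}$, this pins down the $\pi$-adic behaviour of $p$ and $q$. One can push this further using homomorphic images --- for instance the $\pi$-adic valuation extends to a continuous homomorphism $\widehat{v_{\pi}}\colon(\beta\N,\odot)\to(\beta\omega,\oplus)$, whence $\widehat{v_{\pi}}(s\odot r)=\widehat{v_{\pi}}(s)\oplus\widehat{v_{\pi}}(r)$, while a short calculation (using $v_{\pi}(x+y)=\min\{v_{\pi}(x),v_{\pi}(y)\}$ when these differ) evaluates $\widehat{v_{\pi}}(q\oplus p)$ --- but these ``abelian shadows'' of the equation turn out to be mutually compatible, so they only supply necessary conditions, not the contradiction. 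That has to come from the genuinely non-commutative combinatorics of $\beta\N$.

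The core step is to transport the equation down the tower of ideals $\pi^{n}\N$. Since $\pi^{n}\N\in r$, we may write $r=\pi^{n}\odot r_{n}$ with $r_{n}$ the nonprincipal image of $r$ under division by $\pi^{n}$; then $s\odot r=\pi^{n}\odot(s\odot r_{n})$. Since $\pi^{n}\N\in q\oplus p$ as well, and $\N$ acts by continuous injective maps under left multiplication, the image $w_{n}$ of $q\oplus p$ under division by $\pi^{n}$ is well defined and equals $s\odot r_{n}$. A parallel computation --- restricting $p$ and $q$ to their residue classes modulo $\pi^{n}$ and rescaling --- identifies $w_{n}$ with a sum $\widetilde{q}_{n}\oplus\widetilde{p}_{n}$ of two nonprincipal ultrafilters, Rudin--Keisler equivalent to $q$ and $p$. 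Thus $\widetilde{q}_{n}\oplus\widetilde{p}_{n}=s\odot r_{n}$ for every $n$, with the same $s$ throughout and each $r_{n}$ still divisible by infinitely many integers: a coherent tower of instances of the theorem.

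It remains to squeeze a contradiction out of this self-similarity, and this is the step I expect to be the real obstacle. The plan is to pass to a limit: by compactness choose a subnet along which $\widetilde{p}_{n}$, $\widetilde{q}_{n}$, $r_{n}$ and $s\odot r_{n}$ all converge, say to $\widetilde{p}_{\infty}$, $\widetilde{q}_{\infty}$, $r_{\infty}$ and $L$, and then exploit the one-sided continuity of $\oplus$ and $\odot$ (only right multiplication is continuous) together with Theorem~\ref{TMal} --- which says that in a sum $u\oplus a\,\widetilde{p}_{\infty}$ the coefficient $a$ is rigidly determined by the right summand --- to force an equality $u\oplus a\,\widetilde{p}_{\infty}=v\oplus b\,\widetilde{p}_{\infty}$ with $a\neq b$, or else to conclude that some homomorphic image we found to be nonprincipal (such as $\widehat{v_{\pi}}(s)\oplus\widehat{v_{\pi}}(r)$) would after all be principal. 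The delicate points are exactly here: every limit must be taken in the variable in which the operation is continuous, and one must isolate the precise invariant on which the \emph{translated}-copies structure of a sum and the \emph{dilated}-copies structure of a product become incompatible once $r$ is divisible by infinitely many integers --- all the residue and valuation bookkeeping before it being routine by comparison.
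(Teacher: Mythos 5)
This statement is not proved in the paper at all: it is quoted from Hindman and Strauss \cite[Theorem 13.14]{key-22} (the paper only rephrases it), so your attempt has to stand on its own, and as written it is not a proof. The preparatory reductions are fine: $D=\{a\in\N\mid a\N\in r\}$ is closed under divisors and least common multiples, so being infinite it contains either all powers of some prime $\pi$ or infinitely many primes; $a\N\in s\odot r$ for every $a\in D$; reduction modulo $a$ is a homomorphism, giving $a\mid \overline{p}_{a}+\overline{q}_{a}$; and since principal ultrafilters are central for $\odot$ and $\oplus$, one does get $r=\pi^{n}\odot r_{n}$, $q\oplus p=\pi^{n}\odot\left(\widetilde{q}_{n}\oplus\widetilde{p}_{n}\right)$, and hence the tower $\widetilde{q}_{n}\oplus\widetilde{p}_{n}=s\odot r_{n}$. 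But all of this only records necessary conditions (as you yourself observe, the ``abelian shadows'' are mutually consistent), and the theorem is supposed to be established in your final paragraph, which contains no argument: it is a list of candidate strategies (``to force an equality \dots or else to conclude \dots'') together with the admission that this is ``the real obstacle''.

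Moreover, the one concrete strategy you name would fail. With the conventions of this paper, $\oplus$ and $\odot$ are continuous only in the \emph{left} argument: $u\mapsto u\oplus q$ and $u\mapsto u\odot r$ are continuous, while $q\mapsto p\oplus q$ and $r\mapsto s\odot r$ are not when the fixed element is nonprincipal. In your tower the quantity $s\odot r_{n}$ varies precisely in the discontinuous slot, and in $\widetilde{q}_{n}\oplus\widetilde{p}_{n}$ both slots vary; hence passing to a subnet along which $\widetilde{p}_{n},\widetilde{q}_{n},r_{n}$ converge gives no control on the limit of either side, and the hoped-for identity $\widetilde{q}_{\infty}\oplus\widetilde{p}_{\infty}=s\odot r_{\infty}$ does not follow. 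Nor is it indicated how Theorem~\ref{TMal} would ever be applied: that result needs two expressions $u\oplus a p'$ and $v\oplus b p'$ with the \emph{same} nonprincipal right summand $p'$ and distinct integers $a\neq b$, and your construction produces a different $\widetilde{p}_{n}$ at each level with no mechanism for matching two of them. (The second branch of your opening dichotomy is likewise only waved at.) What is missing, in short, is the actual invariant separating sums from products; in Hindman and Strauss this is extracted by a direct combinatorial analysis built on an infinite divisibility chain in $D$, not by self-similar descent and compactness.
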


Finally, we will use the non-PR of combined sums and exponentiations stated in \cite[Theorem 6]{key-4}.

\begin{thm}{\cite[Theorem 6]{key-4}}\label{Sahasr} The configuration $\{x,y,x^{y},a,b,a+b\}$ is not PR. \end{thm}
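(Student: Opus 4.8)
The plan is to combine the ultrafilter characterization of partition regularity with the variable-identification Lemma~\ref{old1} so as to reduce everything to the Diophantine equation $x^{y}=x+y$, which is essentially unsolvable. First I would recast the statement: by the ultrafilter characterization recalled in Section~\ref{prel}, applied to this configuration, $\{x,y,x^{y},a,b,a+b\}$ is PR if and only if there is $p\in\beta\N$ such that for every $A\in p$ there exist $x,y,u,a,b,v\in A$ with $u=x^{y}$ and $v=a+b$. Writing the equation $u=x^{y}$ as $(u-x^{y})^{2}=0$ and $v=a+b$ as $(v-a-b)^{2}=0$ (so that both are bona fide nonnegative functions vanishing exactly on the solution set), this says precisely that $p\models (u=x^{y})\wedge(v=a+b)$. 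So I assume, for contradiction, that such a $p$ exists.

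Next I would feed this into Lemma~\ref{old1} three times. A first application, with $(u-x^{y})^{2}$ listed so that $u$ is its first variable and $(v-a-b)^{2}$ listed so that $v$ is its first variable, yields that $p$ witnesses the same system together with $u=v$; substituting $v=u$ gives $p\models (u=x^{y})\wedge(u=a+b)$. A second application, identifying the first variables $x$ and $a$, gives $p\models (u=x^{y})\wedge(u=x+b)$, and a third, identifying $y$ and $b$, gives $p\models (u=x^{y})\wedge(u=x+y)$: that is, for every $A\in p$ there are $x,y,u\in A$ with $u=x^{y}=x+y$.

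Finally I would invoke the elementary fact that $x^{y}=x+y$ has $(x,y)=(2,2)$ as its only solution in $\N^{2}$: there is no solution with $x=1$ or with $y=1$; for $y=2$ the relation $x^{2}-x-2=0$ forces $x=2$; and for $x\ge 2$, $y\ge 3$ one checks directly that $x^{y}>x+y$. Consequently every $A\in p$ contains both $2$ and $4$, hence $\{2\}\in p$, i.e.\ $p$ is the principal ultrafilter at $2$; but then $A:=\{2\}\in p$, and witnessing the original system inside $A$ would require $x=y=2$ and $u=x^{y}=4\in\{2\}$, which is impossible. Therefore no such $p$ exists, and the configuration is not PR.

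The step that needs care is the iterated use of Lemma~\ref{old1}: after the first identification the two functions $(u-x^{y})^{2}$ and $(u-a-b)^{2}$ share the variable $u$, and after the second they also share $x$, so one must make sure the lemma is still applicable when the two functions have overlapping variable lists (reading the shared variables simply as common data of the joint system). If one would rather not use Lemma~\ref{old1} as a black box in this regime, the single instances needed can be re-proved directly by the standard argument of shifting solutions inside the ultrafilter $p$.
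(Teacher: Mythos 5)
First, note that the paper does not actually prove this statement: it is imported from Sahasrabudhe \cite{key-4}, where it is established by constructing an explicit finite colouring of $\N$ with no monochromatic set of the form $\{x,y,x^{y},a,b,a+b\}$. So there is no internal proof to compare against, and your argument has to stand on its own. Unfortunately it contains a genuine gap, and it is exactly at the step you flag yourself.

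The problem is the second and third applications of Lemma \ref{old1}. That lemma identifies the \emph{first} variables of two systems with \emph{disjoint} variable lists, and its proof is inherently one-coordinate-at-a-time: for each system separately, the set of first coordinates of solutions lying inside $A$ is shown to belong to $p$, and two members of an ultrafilter on $\N$ intersect. Once you have forced $u=v$, the two constraints share the variable $u$; to additionally force $x=a$ \emph{while retaining} $u=v$ you would need the set of pairs $(x,u)$ extendable to a solution of $u=x^{y}$ inside $A$ to meet the set of pairs $(a,u)$ extendable to a solution of $u=a+b$ inside $A$. These are subsets of $\N^{2}$, and no ultrafilter on $\N$ makes them intersect; the ``shift the solutions'' fallback fails for the same reason. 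Worse, the principle you would need --- simultaneous identification of several pairs of variables across the two systems --- is false in general: every $p$ trivially satisfies $p\models x_{1}=x_{2}$, and some $p$ satisfies $p\models y_{1}+y_{2}=y_{3}$ by Schur, yet identifying both $x_{1}=y_{1}$ and $x_{2}=y_{2}$ would make $p$ witness $y_{3}=2y_{1}$ with $y_{1},y_{3}$ in every member of $p$, i.e.\ the partition regularity of $\{x,2x\}$, which fails (colour $n$ by the parity of its $2$-adic valuation). Since your contradiction is extracted only after the second and third identifications, the proof does not go through; indeed, had it worked, it would reduce a theorem whose only known proof is a delicate colouring construction to a two-line formality, which is itself a warning sign.
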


\subsection{Two extensions of exponentiation to ultrafilters}\label{extexp}

As it is well known, there are two natural ways to extend the sum from $\N$ to $\beta\N$: the usual one $(p,q)\rightarrow p\oplus q$ defined by letting, for every $A\in\N$,
\[A\in p\oplus q\Leftrightarrow \{n\in\N\mid \{m\in\N\mid n+m\in A\}\in q\}\in p,\]
and the reverse sum $p\tilde{\oplus} q:= q\oplus p$; if $S(n,m):=n+m$ denotes the sum of natural numbers, then $p\oplus q = \widehat{S}(p\otimes q)$ and $q\oplus p=\widehat{S}(q\otimes p)$. Whilst different, $\oplus$ and $\tilde{\oplus}$ have the same properties, modulo properly translating their meaning by switching ``left'' with ``right'' notions. This is a consequence of the sum being commutative on $\N$. 

Exponentiation, on the other hand, is not commutative nor associative. Hence, when extending it to $\beta\N$, we have at least two natural and truly distinct possible definitions to consider\footnote{Not including those that could be obtained trivially by reversing the order of the variables in the operations.}.

\begin{defn}\label{expdef} Given $p,q\in\beta\N$ we let 
\[E_{1}(p,q):=\{A\subseteq\N\mid \{n\in\N\mid \{m\in\N\mid n^{m}\in A\}\in q\}\in p\},\]
and
\[E_{2}(p,q):=\{A\subseteq\N\mid \{n\in\N\mid \{m\in\N\mid m^{n}\in A\}\in q\}\in p\}.\]
\end{defn}

Equivalently, if $e_{1},e_{2}:\N^{2}\rightarrow \N$ are the exponentiations $e_{1}(n,m)=n^{m}, e_{2}(n,m)=m^{n}$, then $E_{1}(p,q)=\widehat{e_{1}}(p\otimes q)$ and $E_{2}(p,q)=\widehat{e_{2}}(p\otimes q)$.

For $n,m\in\N$, trivially we have that $E_{1}(n,m)=E_{2}(m,n)=n^{m}$; more in general, for $n\in\N, p\in\beta\N$ we have that $E_{1}(n,p)=E_{2}(p,n)$ and $E_{1}(p,n)=E_{2}(n,p)$ as, for all $A\subseteq\N$
\begin{gather*}A\in E_{1}(n,p)\Leftrightarrow \{m\in \N\mid n^{m}\in A\}\in p\Leftrightarrow A\in E_{2}(p,n),\\
A\in E_{1}(p,n)\Leftrightarrow \{m\in \N\mid m^{n}\in A\}\in p\Leftrightarrow A\in E_{2}(n,p).
\end{gather*}

{\bf Notation:} we will use the simplified notation $n^{p}$ to denote any of $E_{1}(n,p),E_{2}(p,n)$, and we will use the simplified notation $p^{n}$ to denote any of $E_{1}(p,n),E_{2}(n,p)$.

\bigskip

At first sight, it might look like the equality $E_{1}(p,q)= E_{2}(q,p)$ could be true for all $p,q\in\beta\N$; however, this is not the case. 

\begin{example} Let both $p,q$ be nonprincipal ultrafilters that contain the set $P$ of primes. Then 
\[P_{1}=\{a^{b}\mid a,b\in P, a<b\}\in E_{1}(p,q), P_{2}=\{a^{b}\mid a,b\in P, a>b\}\in E_{2}(p,q)\]
but $P_{1}\cap P_{2}=\emptyset$, so $E_{1}(p,q)\neq E_{2}(q,p)$.\end{example}

 We will provide some examples of nonprincipal ultrafilters $p,q$ satisfying $E_{1}(p,q)= E_{2}(q,p)$ after Proposition \ref{alg exp}. To prove this proposition, it is helpful to use fractional powers of ultrafilters; to make proper sense of these powers, we prefer to state and prove Proposition \ref{alg exp} for ultrafilters in $\beta\R^{+}_{d}$. We extend the exponentiations to $\beta\R_{d}$ as follows: given $p,q\in\beta\R^{+}_{d}$, we let:
\[E_{1}(p,q):=\{A\subseteq\R^{+}\mid \{x\in\R^{+}\mid \{y\in\R^{+}\mid x^{y}\in A\}\in q\}\in p\},\]
and
\[E_{2}(p,q):=\{A\subseteq\R^{+}\mid \{x\in\R^{+}\mid \{y\in\R^{+}\mid y^{x}\in A\}\in q\}\in p\}.\]
For $r\in\R^{+}$ and $p\in\beta\R^{+}_{d}$, we will keep the simplified notation $r^{p}$ to denote any of $E_{1}(r,p),E_{2}(p,r)$ (which are equal), and $p^{r}$ to denote any of $E_{1}(p,r),E_{2}(r,p)$ (which are also equal).

We start with a preliminary lemma generalizing standard laws of exponents and logarithms.

\begin{lem}\label{lem:basicproplogexp} Let $p\in\beta\R_{d}^{+}$ and let $a,b\in\R^{+}\setminus\{1\}$. The following facts hold:
\begin{enumerate}
\item $\widehat{\log_{a}}\left(b^{q}\right)=\log_{a}(b)\odot q$;
\item $\left(a^{b}\right)^{q}=a^{bq}$;
\item $\left(p^{a}\right)^{b}=p^{ab}$.
\end{enumerate}
\end{lem}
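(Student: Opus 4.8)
The plan is to prove all three identities by the standard ``reduce to a principal operand'' technique for $\beta\R^{+}_{d}$, exploiting the fact that all of $\widehat{\log_{a}}$, $b^{(-)}=E_{1}(b,-)$, $(a^{b})^{(-)}$, $a^{b(-)}$, $(-)^{a}=E_{1}(-,a)$ and $\odot$ are continuous functions of the free ultrafilter variable $q$ (resp. $p$), which agree on the dense set $\R^{+}\subseteq\beta\R^{+}_{d}$. Concretely, for (1) I would note that both $q\mapsto\widehat{\log_{a}}(b^{q})$ and $q\mapsto\log_{a}(b)\odot q$ are continuous maps $\beta\R^{+}_{d}\to\beta\R^{+}_{d}$ (the first as a composition of $\widehat{\log_a}$ with the continuous map $q\mapsto b^q=\widehat{e_1(b,\cdot)}(q)$ obtained by restricting $\widehat{e_1}$ to $\{b\}\times\beta\R^{+}_{d}$, the second as left multiplication by the fixed real $\log_a(b)$, which is continuous); since they agree on every $r\in\R^{+}$ by the elementary identity $\log_a(b^r)=r\log_a(b)$, and $\R^{+}$ is dense in $\beta\R^{+}_{d}$ while $\beta\R^{+}_{d}$ is Hausdorff, they agree everywhere. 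Item (3) is entirely analogous: $b\mapsto(p^a)^b$ and $b\mapsto p^{ab}$ are both continuous functions of $b\in\beta\R^{+}_{d}$ (using continuity of $\widehat{e_1}$ restricted to $\{\text{fixed first coordinate}\}\times\beta\R^{+}_{d}$ together with continuity of $b\mapsto ab$), agreeing on $\R^{+}$ by $(p^a)^b = p^{ab}$ for real $b$, hence everywhere.

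For item (2), which is the one genuine subtlety, I would \emph{not} try to pass directly to the limit, since $a^{bq}$ involves the ultrafilter $q$ entering the exponent after multiplication by the real $b$, and $(a^b)^q$ has the real $a^b$ as base; the cleanest route is to reduce it to item (1). Writing $c=a^b\in\R^{+}\setminus\{1\}$ (the case where $a$ or $b$ makes the base equal to $1$ is trivial and handled separately), one has $\widehat{\log_a}(c^q)=\log_a(c)\odot q=(b)\odot q=bq$ by part (1), since $\log_a(a^b)=b$. Now apply the continuous map $a^{(-)}:\beta\R^{+}_{d}\to\beta\R^{+}_{d}$, $x\mapsto a^x=E_1(a,x)$, to both sides: the left-hand side becomes $a^{\widehat{\log_a}(c^q)}$, and I claim this equals $c^q=(a^b)^q$. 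This last claim is again a density argument: $x\mapsto a^{\widehat{\log_a}(x)}$ is a continuous self-map of $\beta\R^{+}_{d}$ that is the identity on $\R^{+}$ (since $a^{\log_a r}=r$ for positive reals $r$), hence the identity on all of $\beta\R^{+}_{d}$; applying it to $x=c^q$ gives $a^{\widehat{\log_a}(c^q)}=c^q$. The right-hand side becomes $a^{bq}$ by definition of that notation, so $(a^b)^q=a^{bq}$.

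The main obstacle, such as it is, is purely bookkeeping: one must be careful that all the ``continuous in the ultrafilter variable'' assertions are legitimate, i.e. that restricting $\widehat{e_1}\colon\beta(\R^{+}\times\R^{+})\to\beta\R^{+}_{d}$ to a slice with one coordinate fixed at a real number really does yield a continuous map $\beta\R^{+}_{d}\to\beta\R^{+}_{d}$ agreeing with $r\mapsto b^r$ (resp. $r\mapsto r^a$) on $\R^{+}$ — this follows because the inclusion $\R^{+}\hookrightarrow\beta(\R^{+}\times\R^{+})$, $r\mapsto(b,r)$, extends continuously and the composite $\widehat{e_1}\circ(\text{this extension})$ is then the desired continuous extension of $r\mapsto b^r$, by uniqueness of continuous extensions from a dense discrete set into a compact Hausdorff space. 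One should also dispatch at the outset the degenerate sub-cases forced by the hypothesis $a,b\neq 1$: in (2), $a^b\neq 1$ automatically since $a\neq 1$ and $b>0$, so no separate case is actually needed; in (3) the quantity $p^{ab}$ makes sense for all $a,b\in\R^{+}$, so again no case split is required. With these observations in place the proof is three short applications of the same density principle.
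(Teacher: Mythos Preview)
Your density-and-continuity strategy is a legitimate alternative to the paper's proof, which instead unfolds the definitions of both ultrafilters set by set (for (1), one simply chases $A\in\widehat{\log_{a}}(b^{q})$ through the definitions down to $\{z\in\R^{+}\mid z\cdot\log_{a}(b)\in A\}\in q$, and (2),(3) are declared ``similar''). Your approach is slicker when it works and avoids writing out nested set-builder expressions; the paper's approach is more elementary and needs no topological input at all.

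That said, your treatment of (3) is wrong as written. You vary $b$ over $\beta\R^{+}_{d}$ and claim that $b\mapsto(p^{a})^{b}=E_{1}(p^{a},b)$ is continuous ``using continuity of $\widehat{e_{1}}$ restricted to $\{\text{fixed first coordinate}\}\times\beta\R^{+}_{d}$''. But the fixed first coordinate here is the \emph{nonprincipal} ultrafilter $p^{a}$, and for nonprincipal $u$ the map $v\mapsto E_{1}(u,v)$ is \emph{not} continuous in general --- this is the same phenomenon as $q\mapsto p\oplus q$ failing to be continuous for nonprincipal $p$. Your own careful justification in the final paragraph (extending $r\mapsto(b,r)$ continuously into $\beta(\R^{+}\times\R^{+})$) only works when the fixed coordinate is a \emph{real} number. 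Worse, even granting continuity, your base case ``agreeing on $\R^{+}$ by $(p^{a})^{b}=p^{ab}$ for real $b$'' is exactly the statement of (3), so the argument is circular. The fix is immediate: vary $p$ instead. Both $p\mapsto(p^{a})^{b}$ and $p\mapsto p^{ab}$ are continuous (each is the extension of a single real function $x\mapsto x^{a}$, $x\mapsto x^{b}$, $x\mapsto x^{ab}$), and they agree on real $p$ by the ordinary law of exponents.

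A minor remark on (2): it is not a ``genuine subtlety''. Varying $q$ directly works with no detour through (1), since both $q\mapsto(a^{b})^{q}$ and $q\mapsto a^{bq}$ have a principal base and are therefore continuous for the same reason as in (1); they agree on real $q$ by $(a^{b})^{r}=a^{br}$. Your reduction via $\widehat{\log_{a}}$ is correct but unnecessary.
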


\begin{proof} We prove only $(1)$, as $(2)-(3)$ are similar. Let $A\subseteq\R^{+}$. Then
\begin{gather*} A\in \widehat{\log_{a}}\left(b^{q}\right)\Leftrightarrow \{x\in\R^{+}\mid \log_{a}(x)\in A\}\in b^{q} \Leftrightarrow\\ \{z\in\R^{+}\mid b^{z}\in \{x\in\R^{+}\mid \log_{a}(x)\in A\}\}\in q\Leftrightarrow \{z\in \R^{+}\mid \log_{a}(b^{z})\in A\}\in q\Leftrightarrow \\ \{z\in \R^{+}\mid z\cdot \log_{a}(b)\in A\}\in q\Leftrightarrow A\in \log_{a}(b)\odot q.\qedhere\end{gather*}
\end{proof}

\begin{prop}\label{alg exp} Let $p,q,r\in\beta\R_{d}^{+}\setminus\{1\}$. Let $a,b\in\R^{+}$ with $a\geq b>1$. The following properties hold:

\begin{enumerate}
\item $E_{1}\left(p^{a},q\right)=E_{1}\left(p,aq\right)$ and $E_{2}\left(q,p^{a}\right)=E_{2}\left(aq,p\right)$;
\item $a^{p}=b^{q}\Leftrightarrow \left(\log_{a}(b)\right)\odot p=q$; 
\item $p^{a}=q^{b}\Leftrightarrow p^{\frac{a}{b}}=q$; 
\item $a^{p}\odot a^{q}=a^{p\oplus q}$;
\item $E_{1}\left(p,q\odot r\right)=E_{1}\left(E_{1}(p,q),r\right)$; in particular, if $q$ is multiplicatively idempotent, $E_{1}(p,q)=E_{1}\left(E_{1}(p,q),q\right)$;
\item $E_{2}\left(p\odot q,r\right)=E_{2}\left(p,E_{2}(q,r)\right)$; in particular, if $p$ is multiplicatively idempotent, $E_{2}(p,q)=E_{2}\left(p,E_{2}(p,q)\right)$.
\end{enumerate}
\end{prop}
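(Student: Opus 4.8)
The plan is to verify each of the six identities directly at the level of subsets of $\R^{+}$ (or $\N$), unwinding the definitions of $E_1$, $E_2$, the tensor product, and $\odot$, and using Lemma~\ref{lem:basicproplogexp} to handle the arithmetic of exponents and logarithms. The guiding principle throughout is the functorial description $E_i(p,q)=\widehat{e_i}(p\otimes q)$ together with the standard fact that $\widehat{f\circ g}=\widehat{f}\circ\widehat{g}$ for continuous maps on Stone-\v{C}ech compactifications; most items are then a matter of identifying the right composition of functions on $\R^{+}$ or $\R^{+}\times\R^{+}$.

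First I would dispatch (1): $E_1(p^a,q)=E_1(p,aq)$ follows since for $A\subseteq\R^{+}$, $A\in E_1(p^a,q)$ iff $\{x\mid\{y\mid x^y\in A\}\in q\}\in p^a$ iff $\{z\mid z^a\in\{x\mid\{y\mid x^y\in A\}\in q\}\}\in p$ iff $\{z\mid\{y\mid z^{ay}\in A\}\in q\}\in p$ iff $\{z\mid\{w\mid z^{w}\in A\}\in aq\}\in p$, where the last step rewrites $\{y\mid z^{ay}\in A\}\in q$ as $\{w\mid z^w\in A\}\in aq$ using the definition of $a\odot q$; the $E_2$ statement is symmetric. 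Item (2) is essentially Lemma~\ref{lem:basicproplogexp}(1): $a^p=b^q$ iff applying the homeomorphism $\widehat{\log_a}$ gives $\widehat{\log_a}(a^p)=\widehat{\log_a}(b^q)$, i.e. $p=\log_a(b)\odot q$ by parts (1) of the lemma (noting $\log_a a=1$); injectivity of $\widehat{\log_a}$ gives the converse. Item (3) is Lemma~\ref{lem:basicproplogexp}(3): $p^a=q^b$; apply the $b$-th root map (a homeomorphism of $\R^{+}$), which sends $p^a\mapsto p^{a/b}$ and $q^b\mapsto q$, and use injectivity again. Item (4) is the analogue of the classical $a^{x}a^{y}=a^{x+y}$: $a^p\odot a^q=\widehat{\exp_a}(p)\odot\widehat{\exp_a}(q)$ and one checks $\widehat{\exp_a}(p)\odot\widehat{\exp_a}(q)=\widehat{\exp_a}(p\oplus q)$ because $\exp_a$ is a continuous homomorphism from $(\R^{+},+)$ to $(\R^{+},\cdot)$, and the extension of a semigroup homomorphism to Stone-\v{C}ech compactifications remains a homomorphism; alternatively one expands both sides set-theoretically.

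Items (5) and (6) are the associativity-type statements and are the most substantial. For (5), I would show $A\in E_1(p,q\odot r)$ iff $A\in E_1(E_1(p,q),r)$ by a two-level unwinding: $A\in E_1(E_1(p,q),r)$ means $\{u\mid\{v\mid u^v\in A\}\in r\}\in E_1(p,q)$, which by definition of $E_1(p,q)$ means $\{n\mid\{m\mid n^m\in\{u\mid\{v\mid u^v\in A\}\in r\}\}\in q\}\in p$, i.e. $\{n\mid\{m\mid\{v\mid (n^m)^v\in A\}\in r\}\in q\}\in p$; using $(n^m)^v=n^{mv}$ and the definition of $\odot$ to collapse the inner double quantifier $\{m\mid\{v\mid n^{mv}\in A\}\in r\}\in q$ into $\{w\mid n^w\in A\}\in q\odot r$, this equals $A\in E_1(p,q\odot r)$. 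The ``in particular'' clause is immediate on substituting $q\odot q=q$. Item (6) is the mirror image on the left variable: one checks $A\in E_2(p\odot q,r)$ iff $\{n\mid\{m\mid m^n\in(\cdots)\}\in\cdots\}$ unwinds, using $(m^a)^b=m^{ab}$ hidden inside, to $A\in E_2(p,E_2(q,r))$, with the left-idempotent consequence following by substituting $p\odot p=p$.

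The main obstacle I anticipate is purely bookkeeping: in (5) and (6) there are three nested ultrafilter quantifiers and one must be careful about which ``outside'' variable gets exponentiated to which power, and about the direction in which $\odot$ is applied (it appears as $q\odot r$ in (5) but $p\odot q$ in (6), reflecting the left/right asymmetry of $e_1$ versus $e_2$). The substantive input making the collapse work is the laws $(a^b)^c=a^{bc}$ — on $\R^{+}$ with $a>1$ this is Lemma~\ref{lem:basicproplogexp} applied pointwise, and it is the reason $q\odot r$ (multiplicative, not additive) is the correct operation. No deep idea is needed beyond this; the risk is simply an index slip, so I would write out (5) in full and then obtain (6) by the symmetry $e_2(n,m)=e_1(m,n)$ with the roles of the two ultrafilter slots interchanged, rather than redoing the computation from scratch.
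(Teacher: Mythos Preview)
Your proposal is correct and follows essentially the same route as the paper: items (1), (4), (5), (6) are handled by direct set-theoretic unwinding of the definitions of $E_1,E_2,\otimes,\odot$, and items (2), (3) by invoking Lemma~\ref{lem:basicproplogexp} together with the bijectivity of $\widehat{\log_a}$ and of the $b$-th root map. The only minor variation is your first option for (4), via the fact that the Stone--\v{C}ech extension of the semigroup homomorphism $\exp_a:(\R^{+},+)\to(\R^{+},\cdot)$ is again a homomorphism; the paper instead performs the explicit set-level computation you list as the alternative.
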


\begin{proof} $(1)$ Let $A\subseteq\R^{+}$. Then 
\begin{gather*}
A\in E_{1}\left(p^{a},q\right)\Leftrightarrow \{x\in\R^{+}\mid \{y\in\R^{+}\mid x^{y}\in A\}\in q\}\in p^{a}\Leftrightarrow\\ \{z\in\R^{+}\mid z^{a}\in\{x\in\R^{+}\mid\{y\in\R^{+}\mid x^{y}\in A\}\in q\}\}\in p\Leftrightarrow\\ \{z\in\R^{+}\mid \{y\in\R^{+}\mid \left(z^{a}\right)^{y}\in A\}\in q\}\in p \Leftrightarrow \{z\in\R^{+}\mid \{y\in\R^{+}\mid z^{\left(ay\right)}\in A\}\in q\}\in p\Leftrightarrow \\ \{z\in \R^{+}\mid \{t\in\R^{+}\mid z^{t}\in A\}\in aq\}\in p\Leftrightarrow A\in E_{1}(p,aq).\end{gather*}
The $E_{2}$-case is analogous.

$(2)$ If $a^{p}=b^{q}$, by Lemma \ref{lem:basicproplogexp}.(1) $p=\widehat{\log_{a}}\left(a^{p}\right)=\widehat{\log_{a}}\left(b^{q}\right)=\left(\log_{a}(b)\right)\odot q$; conversely, if $p=\left(\log_{a}(b)\right)\odot q$ then $a^{p}=a^{\left(\log_{a}(b)\right)\odot q}=\left(a^{\log_{a}(b)}\right)^{q}=b^{q}$ by Lemma \ref{lem:basicproplogexp}.(2).

$(3)$ If $p^{a}=q^{b}$, by Lemma \ref{lem:basicproplogexp}.(3) $p^{\frac{a}{b}}=\left(p^{a}\right)^{\frac{1}{b}}=\left(q^{b}\right)^{\frac{1}{b}}=q$; conversely, again by Lemma \ref{lem:basicproplogexp}.(3), if $p^{\frac{a}{b}}=q$ then $q^{b}=\left(p^{\frac{a}{b}}\right)^{b}=p^{\frac{ab}{b}}=p^{a}$.

$(4)$ Let $A\subseteq\R^{+}$. Then
\begin{gather*}
A\in a^{p}\odot a^{q}\Leftrightarrow \{x\in \R^{+}\mid \{y\in \R^{+}\mid xy\in A\}\in a^{q}\}\in a^{p}\Leftrightarrow\\ \{x\in\R^{+}\mid \{t\in \R^{+}\mid a^{t}\in\{y\in \R^{+}\mid xy\in A\}\}\in q\}\in a^{p}\Leftrightarrow\\ \{x\in\R^{+}\mid \{t\in\R^{+}\mid x\cdot a^{t}\in A\}\in q\}\in a^{p}\Leftrightarrow \{z\in \R^{+}\mid a^{z}\in \{ x\in \R^{+}\mid \{t\in\R^{+}\mid x\cdot a^{t}\in A\}\in q\}\in p\Leftrightarrow\\ \{z\in \R^{+}\mid \{t\in \R^{+}\mid a^{t+z}\in A\}\in q\}\in p\Leftrightarrow
   A\in a^{p\oplus q}. \end{gather*}

$(5)$ Let $A\subseteq\R^{+}$. Then
\begin{gather*}
A\in E_{1}(p,q\odot r)\Leftrightarrow\{x\in\R^{+}\mid \{y\in\R^{+}\mid x^{y}\in A\}\in q\odot r\}\in p\Leftrightarrow\\
\{x\in\R^{+}\mid \{t\in\R^{+}\mid \{z\in\R^{+}\mid x^{tz}\in A\}\in r\}\in q\}\in p\Leftrightarrow\\ \{x\in\R^{+}\mid \{t\in\R^{+}\mid \{z\in\R^{+}\mid \left(x^{t}\right)^{z}\in A\}\in r\}\in q\}\in p\Leftrightarrow\\\{y\in\R^{+}\mid \{z\in\R^{+}\mid s^{z}\in A\}\in r\}\in E_{1}\left(p,q\right)\Leftrightarrow
 A\in E_{1}\left(E_{1}\left(p,q\right),r\right).
\end{gather*}

$(6)$ This is analogous to $(5)$. Let $A\subseteq\R^{+}$. Then 
\begin{gather*}
A\in E_{2}\left(p,E_{2}(q,r)\right)\Leftrightarrow \{x\in\R^{+}\mid\{y\in\R^{+}\mid y^{x}\in A\}\in E_{2}(q,r)\}\in p\Leftrightarrow\\\{x\in\R^{+}\mid\{t\in\R^{+}\mid\{z\in\R^{+}\mid \left(z^{t}\right)^{x}\in A\}\in r\}\in q\}\in p\Leftrightarrow\\\{x\in\R^{+}\mid\{t\in\R^{+}\mid\{z\in\R^{+}\mid z^{tx}\in A\}\in r\}\in q\}\in p\Leftrightarrow\{s\in\R^{+}\mid\{z\in\R^{+}\mid z^{s}\in A\}\in r\}\in p\odot q.\qedhere
\end{gather*}
    \end{proof}

\begin{cor} Let $p,q\in\beta\N\setminus{1}$, $a,b\in\N\setminus\{1\}$. Then
\begin{enumerate}
\item $a^{p}=a^{q}\Leftrightarrow p=q$;
\item $p^{a}=p^{b}\Leftrightarrow a=b$.
\end{enumerate}
\end{cor}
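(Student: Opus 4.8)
The plan is to derive both items of the corollary directly from Proposition \ref{alg exp}, using that $\N\setminus\{1\}\subseteq\R^{+}\setminus\{1\}$ and that the exponentiations $E_1,E_2$ on $\beta\N$ are the restrictions of those on $\beta\R^+_d$ (since $e_1,e_2$ map $\N^2$ into $\N$, the continuous extensions agree). So throughout I work inside $\beta\R^+_d$ and then note that everything takes place over $\N$.

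For item $(1)$: I would apply Proposition \ref{alg exp}.(2) with the roles of $a,b$ both played by the same base $a\in\N\setminus\{1\}$. That statement requires $a\ge b>1$, which holds with $b=a$. It gives $a^{p}=a^{q}\Leftrightarrow \log_a(a)\odot q = p$. Since $\log_a(a)=1$ and $1\odot q = q$ (as $1$ is the multiplicative identity and we identify $1\in\N$ with its principal ultrafilter), this reads $a^p=a^q\Leftrightarrow p=q$. One should double-check the hypothesis bookkeeping: Proposition \ref{alg exp} is stated for $p,q,r\in\beta\R^+_d\setminus\{1\}$ and $a\ge b>1$, all of which are satisfied here, and the reverse implication ($p=q\Rightarrow a^p=a^q$) is trivial anyway.

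For item $(2)$: here I want to conclude from $p^a=p^b$ that $a=b$, so I cannot use Proposition \ref{alg exp}.(3) with two different ultrafilters on the two sides in the obvious way — instead I take the same $p$ on both sides. Assuming without loss of generality $a\ge b$ (otherwise swap), Proposition \ref{alg exp}.(3) with $q=p$ gives $p^a=p^b\Leftrightarrow p^{a/b}=p$. So it remains to show that $p^{a/b}=p$ forces $a/b=1$, i.e. $a=b$. Since $p\neq 1$, pick a set $A\in p$ witnessing nonprincipality appropriately; concretely, if $c:=a/b>1$ then the map $x\mapsto x^{c}$ on $\R^+$ has no fixed point other than $x=1$, and more to the point $\widehat{(x\mapsto x^c)}(p)=p^c$; one shows $p^c=p$ is impossible for $c>1$ by a growth/separation argument — e.g. the sets $\{x : \lfloor \log\log x\rfloor \text{ even}\}$ and its complement get swapped (or shifted) by $x\mapsto x^c$ in a way incompatible with being in the same ultrafilter. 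Alternatively, and more cleanly, apply $\widehat{\log_a}$ to both sides: by Lemma \ref{lem:basicproplogexp}.(1), $p^a=p^b$ need not directly simplify, but from $p^{c}=p$ with $c>1$ we get, applying a logarithm, $c\odot \widehat{\log}(p) = \widehat{\log}(p)$, i.e. $\widehat{\log}(p)$ would be a nonprincipal (since $p\neq 1$) ultrafilter fixed by multiplication by $c>1$, which is impossible by the standard fact that $c\odot u = u$ has no solution in $\beta\R^+_d\setminus\{1\}$ for $c>1$ (a $c$-adic valuation parity argument).

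The main obstacle is the last step of $(2)$: ruling out $p^{c}=p$ for $c=a/b>1$. Everything else is a one-line substitution into the proposition. I expect the cleanest route is the logarithm reduction just described, turning it into the well-known statement that no nonprincipal ultrafilter is fixed under multiplication by an integer (or rational) $>1$; this is exactly the kind of fact the paper is already comfortable citing, and indeed Theorem \ref{TMal} with $u=v$, $b=1$ would also do the job once one is in the additive/multiplicative setting. If one wants to stay entirely within $\N$ and avoid $\R^+_d$, one can instead argue directly: $p^a=p^b$ with, say, $a>b$ means $\{x^a : x\in X\}$ and $\{x^b:x\in X\}$ generate the same ultrafilter for suitable $X\in p$, but for $x$ large these sets of perfect powers are disjoint along any fixed $p$-large set (since a number is rarely both an $a$-th and a $b$-th power unless it is a $\mathrm{lcm}(a,b)$-th power, and one can partition $\N$ to separate them), contradicting $p\neq 1$. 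I would present the logarithm version as the main proof and remark the elementary alternative.
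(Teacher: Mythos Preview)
Your approach matches the paper's: both items are deduced from Proposition~\ref{alg exp}, part~(2) giving item~(1) immediately (with $b=a$ so $\log_a(a)=1$), and part~(3) reducing item~(2) to the claim that $p^{a/b}=p$ forces $a/b=1$. The paper's proof is in fact just these two sentences; it simply \emph{asserts} that $p^{a/b}=p\Leftrightarrow a/b=1$ without further argument.

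You go beyond the paper by actually justifying this last claim, and your logarithm reduction is correct: $\widehat{\log}(p^{c})=c\odot\widehat{\log}(p)$, so $p^{c}=p$ with $c>1$ would give a $u\in\beta\R^{+}_{d}$ with $c\odot u=u$, which is ruled out by partitioning $\R^{+}$ according to the parity of $\lfloor\log_{c}x\rfloor$ (your ``$c$-adic parity'' remark). One caution: Theorem~\ref{TMal} as stated is additive and over $\beta\N$, so it does not literally apply here; the direct parity-partition argument is the clean way to close this step, and your $\lfloor\log\log x\rfloor$ idea is a variant of the same trick. The perfect-power alternative you sketch at the end is unnecessary once you have this.
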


\begin{proof} (1) This follows from Proposition \ref{alg exp}.(2).

(2) This follows from Proposition \ref{alg exp}.(3), as $p^{\frac{a}{b}}=p$ if and only if $\frac{a}{b}=1$. \end{proof}

\begin{example}\label{equalE1E2}
    Let $n\in\N\setminus\{1\}$ and $p,q\in\beta\N $ be such that $p\odot q=q\odot p$. Then by Proposition \ref{alg exp} we have that 
    \[E_{1}\left(n^{p},q\right)=n^{p\odot q}= n^{q\odot p}=E_{2}\left(q,n^{p}\right).\]
    Let us notice that the equation $p\odot q=q\odot p$ can be solved by many pairs $p,q\in\beta\N\setminus\N$, for example by letting $p=q$, or by taking $p,q$ multiplicative idempotents with $p<q$. Hence, the equation $E_{1}(u,v)=E_{2}(u,v)$ has plenty of nontrivial solutions in $\beta\N\times \beta\N$.
\end{example}

Proposition \ref{alg exp} might again suggest that many arithmetic properties of the exponentiations of natural numbers hold also for $E_{1},E_{2}$. For example, Proposition \ref{alg exp}.(4) generalizes the usual rule $a^{b}\cdot a^{c}=a^{bc}$ to the case where $b,c$ are ultrafilters. However, this same rule cannot be generalized to the case where $a$ is an ultrafilter, as we show in the next proposition.

\begin{prop}\label{neqr} Let $p\in\beta\N\setminus \N$, and let $a,b\in\N$. Then $p^{a}\odot p^{b}\neq p^{a+b}$.\end{prop}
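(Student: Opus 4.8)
The plan is to argue by contradiction: assume $p^{a}\odot p^{b}=p^{a+b}$ and derive $a=0$. The crux is an \emph{associativity trick} that reduces matters to the cancellation statement of Theorem \ref{TMal}. I would first isolate the following claim: \emph{for every $q\in\beta\N\setminus\N$ and all $a,b\in\N$ one has $aq\oplus bq\neq(a+b)q$.} Indeed, if $aq\oplus bq=(a+b)q$, then associativity of $\oplus$ gives
\[\bigl((a+b)q\oplus aq\bigr)\oplus bq \;=\; (a+b)q\oplus\bigl(aq\oplus bq\bigr)\;=\;(a+b)q\oplus(a+b)q ,\]
an identity of the shape $u\oplus bq=v\oplus(a+b)q$ with $u=(a+b)q\oplus aq$ and $v=(a+b)q$ in $\beta\N$; since $q$ is nonprincipal, Theorem \ref{TMal} forces $b=a+b$, i.e. $a=0$, contradicting $a\geq1$.

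To reduce the proposition to this claim, recall that $p^{a}\odot p^{b}=\widehat{G}(p\otimes p)$ with $G(n,m)=n^{a}m^{b}$, while $p^{a+b}=\widehat{\pi_{a+b}}(p)$ with $\pi_{c}(n)=n^{c}$. The function $\Omega(n)$ (the number of prime factors of $n$ counted with multiplicity) satisfies $\Omega(nm)=\Omega(n)+\Omega(m)$ and $\Omega(n^{c})=c\,\Omega(n)$, so $\widehat{\Omega}\colon(\beta\N,\odot)\to(\beta\omega,\oplus)$ is a homomorphism; applying it to the hypothesis yields $a\,\widehat{\Omega}(p)\oplus b\,\widehat{\Omega}(p)=(a+b)\,\widehat{\Omega}(p)$. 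If $\widehat{\Omega}(p)$ is nonprincipal then, since $\Omega^{-1}(0)=\{1\}\notin p$, it lies in $\beta\N\setminus\N$, and the claim above is contradicted.

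The remaining case is $\widehat{\Omega}(p)$ principal, say $\Omega^{-1}(k)\in p$ with $k\geq1$. Here I would pass to the largest-prime statistics: let $L(n)$ be the largest prime dividing $n$ and $v_{L}(n)=v_{L(n)}(n)$ its multiplicity. Since $\Omega^{-1}(k)\cap\{n:L(n)\leq\ell\}$ is finite for each prime $\ell$ and $p$ is nonprincipal, $\widehat{L}(p)$ is not principal, hence $\{n:L(n)>\ell\}\in p$ for every prime $\ell$; on the other hand $1\leq v_{L}(n)\leq\Omega(n)=k$ for $p$-almost every $n$, so $\widehat{v_{L}}(p)$ \emph{is} principal, at some $j\in\{1,\dots,k\}$. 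Now $v_{L}(n^{a+b})=(a+b)v_{L}(n)$, so $\widehat{v_{L}}(p^{a+b})=\widehat{v_{L}}(\widehat{\pi_{a+b}}(p))$ is the principal ultrafilter at $(a+b)j$; whereas for $p\otimes p$-almost every $(n,m)$ we have $L(m)>L(n)$, whence $L(n^{a}m^{b})=L(m)$ and $v_{L}(n^{a}m^{b})=b\,v_{L}(m)=bj$, so $\widehat{v_{L}}(p^{a}\odot p^{b})=\widehat{v_{L}\circ G}(p\otimes p)$ is the principal ultrafilter at $bj$. Equality of the two would force $bj=(a+b)j$, i.e. $aj=0$, which is impossible since $a\geq1$ and $j\geq1$.

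The step I expect to be delicate is the first reduction. A logarithmic change of base merely re-expresses $p^{a}\odot p^{b}=p^{a+b}$ as an isomorphic copy of itself and so does not help; one must instead feed the identity into a \emph{collapsing} additive homomorphism (here $\widehat{\Omega}$) in order to invoke Theorem \ref{TMal}, and the price is the separate, somewhat technical treatment of the degenerate case in which $\Omega$ is bounded on a member of $p$ — which is precisely where the largest-prime computation is needed. Everything else is the routine bookkeeping of pushing ultrafilters through continuous extensions and tensor products.
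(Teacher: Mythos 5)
Your proof is correct, and it follows the same overall strategy as the paper's: push the hypothesized identity through $\widehat{\Omega}$ to reduce to Maleki's cancellation theorem (Theorem \ref{TMal}), and handle the degenerate case with largest-prime statistics. Two differences are worth noting. First, you organize the case split around whether $\widehat{\Omega}(p)$ is principal, whereas the paper splits on whether $\widehat{F}(p)$ is principal and then deduces nonprincipality of $\widehat{\Omega}(p)$ in its first case; both dichotomies are exhaustive. Second, and more substantively, your degenerate case is resolved differently: because $\Omega$ is bounded on a member of $p$ in that case, $\widehat{v_{L}}(p)$ is \emph{principal}, and you finish with the purely numerical contradiction $bj=(a+b)j$ with $a,j\geq 1$ --- no further cancellation theorem is needed. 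The paper instead works with $H(n)=F(n)^{G(n)}$, shows $\widehat{H}(u\odot q)=\widehat{H}(q)$ by a continuity/density argument, and invokes cancellation a second time (incidentally, the paper's displayed identities $\widehat{H}(p^{b})=b\widehat{H}(p)$ and $H(n^{a+b})=(a+b)H(n)$ should be multiplicative, $\widehat{H}(p)^{b}$ and $H(n)^{a+b}$, so your route sidesteps a notational slip there). Finally, your associativity padding $\bigl((a+b)q\oplus aq\bigr)\oplus bq=(a+b)q\oplus(a+b)q$ is a welcome extra step: the identity $aq\oplus bq=(a+b)q$ is not literally of the form $u\oplus aq=v\oplus bq$ required by Theorem \ref{TMal}, and the paper applies that theorem without this adjustment.
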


\begin{proof} Assume, by contrast, that $p^{a}\odot p^{b}= p^{a+b}$. 

Let $F:\N\rightarrow\N$ be the function that maps a number $n$ to its largest prime divisor. Observe that, for all $p_{1},p_{2}\in\beta\N$, $\widehat{F}(E_{2}(p_{1},p_{2}))=p_{1}\odot \widehat{F}(p_{2})$. We consider two cases.

{\bfseries Case 1:} $\widehat{F}(p)=i\in\N$. In this case, consider the function $\Omega:\N\rightarrow\N$ that maps a number $n$ to the sum of the exponents in its prime factorization. Necessarily, $\widehat{\Omega}(p)\notin\N$; in fact, if by contrast $\widehat{\Omega}(p)=j\in\N$, we would have that $p\in\overline{S}$, where
\[S=\{n\in\N\mid F(n)=i, \Omega(n)=j\},\]
which is a contradiction as $p$ is nonprincipal and $S$ is finite. Hence $\widehat{\Omega}(p)\notin\N$. As $p^{a}\odot p^{b}= p^{a+b}$, we have that \[\widehat{\Omega}\left(p^{a}\odot p^{b}\right)=\widehat{\Omega}\left(p^{a}\right)\oplus \widehat{\Omega}\left(p^{b}\right)=a\widehat{\Omega}(p)\oplus b\widehat{\Omega}(p),\] 
whilst \[\widehat{\Omega}\left(p^{a+b}\right)=(a+b)\widehat{\Omega}(p).\]
We conclude as $a\widehat{\Omega}(p)\oplus b\widehat{\Omega}(p)\neq (a+b)\widehat{\Omega}(p)$ by Theorem \ref{TMal}. 

{\bfseries Case 2:} $\widehat{F}(p)\notin\N$. Let $G:\N\rightarrow\N$ be the function $n\rightarrow \max\{m\in\N\mid F(n)^{m} \ \text{divides} \ n\}$, and let $H(n):=F(n)^{G(n)}$. As $\widehat{F}(p)$ is nonprincipal, also $\widehat{H}(p)$ is nonprincipal. Notice that:

\begin{itemize}
\item $\widehat{H}\left( p^{a}\odot p^{b}\right)=b\widehat{H}(p)$: in fact, for $x\in\N$, let $D_{x}=\{y\in\N\mid F(y)> F(x)\}$. If $x\in\N$ and $q\in\overline{D_{x}}$, then $\widehat{H}(x\odot q)=\widehat{H}(q)$. Therefore, if we let $\rho_{q}:\beta\N\rightarrow\beta\N$ be the function mapping $u\in\beta\N$ into $u\odot q$, we have that $\widehat{H}\circ\rho_{q}$ is constantly equal to $\widehat{H}(q)$ on $\N$; as both these functions are continuous and $\N$ is dense in $\beta\N$, $\widehat{H}(p\odot q)=\widehat{H}(q)$. Therefore $\widehat{H}\left(p^{a}\odot p^{b}\right)=\widehat{H}\left(p^{b}\right)=b\widehat{H}(p)$;
\item $(a+b)\widehat{H}(p)=\widehat{H}\left(p^{a+b}\right)$, as for all $n\in\N$ $(a+b)H(n)=H\left(n^{a+b}\right)$.
\end{itemize}

Therefore
\[\widehat{H}\left( p^{a}\odot p^{b}\right)=b\widehat{H}(p)\neq (a+b)\widehat{H}(p)=\widehat{H}\left(p^{a+b}\right),\]
again by Theorem \ref{TMal}. \end{proof}

\begin{rem} The above result proves that the equalities $E_{1}(p,a)\odot E_{1}(p,b)= E_{1}(p,a+b), E_{2}(a,p)\odot E_{2}(b,p)= E_{2}(a+b,p)$ does not hold when $a,b\in\N,p\in\beta\N\setminus\N$. They do not need to hold even when $a,b\in\beta\N\setminus\N$. E.g., assume that $p,q,r$ are nonprincipal and $P=\{n\in\N\mid n \ \text{is a prime}\}\in p$. A direct computation shows that 
\[I_{1}=\{n\in\N\mid \exists p_{1},p_{2}\in P, a,b\in\N \ \text{s.t.}\ p_{1}<a<p_{2}<b \ \text{and} \ p_{1}^{a}p_{2}^{b}=n\}\in E_{1}(p,q)\odot E_{1}(p,r),\]
\[I_{2}=\{n\in\N\mid \exists p_{1},p_{2}\in P, a,b\in\N \ \text{s.t.}\ p_{1}>a>p_{2}>b \ \text{and} \ p_{1}^{a}p_{2}^{b}=n\}\in E_{2}(q,p)\odot E_{2}(r,p),\]

whilst 

\[J=\{n\in\N\mid \exists p\in P, a\in\N \ \text{s.t.}\  p^{a}=n\}\in E_{1}(p,q\oplus r)\cap E_{2}(q\oplus r, p),\]

and we conclude that $E_{1}(p,q)\odot E_{1}(p,r)\neq E_{1}(p,q\oplus r)$ and $E_{2}(q,p)\odot E_{2}(r,p)\neq E_{2}(q\oplus r,p)$ as $I_{1}\cap J=I_{2}\cap J=\emptyset.$\end{rem}

We now want to show that the operations $E_{1},E_{2}$ are related to the partition regularity of exponential triples, as well as of other exponential configurations. This should not come as a surprise: the main results in \cite{key-5} can be restated in the terminology introduced in Section \ref{prel} as follows:

\begin{thm}[\protect{\cite[Theorem 2.1, restated]{key-5}}] If $p$ is a Van der Waerden's witness then $2^{p}\odot p\models \{x,y,2^{x}y\}$.\end{thm}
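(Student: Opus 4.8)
The plan is to unwind the definitions so that the statement ``$2^{p}\odot p \models \{x,y,2^{x}y\}$'' becomes a concrete combinatorial assertion, and then to feed a suitably chosen monochromatic van der Waerden progression into the function $f(n,m)=2^{n}m$. First I would fix $A \in 2^{p}\odot p$. By the definition of $\odot$ and of $2^{p}=E_{1}(2,p)$, membership $A \in 2^{p}\odot p$ means that
\[
\{k\in\N \mid \{m\in\N \mid 2^{k}m \in A\}\in p\}\in 2^{p},
\]
and unwinding $2^{p}$ once more this says $\{n\in\N \mid \{m\in\N \mid 2^{2^{n}}m\in A\}\in p\}\in p$. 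So there is a single set $B\in p$ — intersecting the two ``$p$-large'' layers — such that for every $n\in B$ the set $C_{n}:=\{m\in\N\mid 2^{n}m\in A\}$ lies in $p$, and moreover (taking $n$ of the form forced by the outer $2^{p}$) we may assume each such $n$ is itself a power of $2$, or more simply we just record $B\in p$ and $C_{n}\in p$ for all $n\in B$. Intersecting: for $n\in B$, $B\cap C_{n}\in p$, hence nonempty.

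Next comes the van der Waerden input. Because $p$ is a van der Waerden witness, $p\models \{x,x+y,\dots,x+ky\}$ for every $k$; in particular $p$ witnesses the partition regularity of the $3$-term configuration $\{x, x+y, x+2y\}$ (arithmetic progressions), and more to the point it witnesses $\{y, x+y\}$-type patterns. The key trick, exactly as in Di Nasso--Ragosta, is: given $n\in B$, I want $a,b\in\N$ with $2^{a}b\in A$, $b\in A$, and $a\in A$ — that is the configuration $\{x,y,2^{x}y\}$ with $x=a$, $y=b$. I would apply the van der Waerden property to the set $B\cap C_{n}\in p$ to find inside it a length-$(n{+}1)$ arithmetic progression $\{c, c+d, \dots, c+nd\}$, all of whose terms lie in $B\cap C_{n}$, hence in $A$ (if $A\in p$; here one should note $2^{p}\odot p$ being nonprincipal forces the relevant sets to meet $A$ — actually $A$ itself is the generic set, so $c, c+d,\dots \in A$ is automatic once they lie in $C_n \subseteq$ the relevant preimage — I'd be careful to track which sets are subsets of $A$). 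Then the standard identity: take $x := n$ (or $x$ read off from the progression) and $y := $ one of the progression terms, arranged so that $2^{x} y$ is another progression term lying in $A$. Concretely, with a progression $c, c+d, \dots, c+nd$ inside $A\cap C_n$, one has $2^{?}\cdot(\text{term}) = (\text{term})$ only in degenerate cases, so the actual mechanism is to use $n$ as the exponent and the progression to absorb the multiplicative factor: if $c$ and $2^{n}c$ were both available we'd be done, and a length-$n$ progression with common difference $c$ gives $c, 2c, \dots$ — no; the right move is a progression with \emph{first term equal to the common difference}, i.e. $d, 2d, 3d, \dots, (2^{n})d$ if $2^n \le$ length, giving $y=d\in A$ and $2^{n}d\in A$ and $n\in A$ — so I need length $\ge 2^{n}$, which van der Waerden supplies since $p$ witnesses all lengths.

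Let me restate the clean version: fix $A\in 2^{p}\odot p$. Unwinding gives $B:=\{n\mid C_n\in p\}\in p$ where $C_n=\{m\mid 2^nm\in A\}$, and also (from the outer exponentiation) the set of admissible $n$ is in $p$, so in particular $B\cap A\in p$ is infinite; pick $n\in B\cap A$. Then $C_n\in p$, and since $p$ is a vdW witness, $p\models\{x,x+y,\dots,x+(2^n-1)y\}$, so $C_n$ contains a progression $\{c,c+d,\dots,c+(2^n-1)d\}$; wait — I actually want this progression to also be $\subseteq A$. Since $C_n\subseteq\{m: 2^nm\in A\}$ not $\subseteq A$, I should instead apply vdW to $A$ directly using that $p\models$ all APs and $A\in p'$... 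The correct fix, and the one obstacle I expect to wrestle with, is bookkeeping which ultrafilter each set belongs to: one wants a single set in $p$ that is simultaneously a subset of $A$ (via the multiplicative shift) and large enough to run van der Waerden. I would resolve this by choosing the progression in $C_n \cap (\text{the set that makes the top layer work})$, with common difference $d$ chosen from that same set, noting $n \in A$, $d \in$ (something forcing $d\in A$), and $2^n d \in A$ because $d \in C_n$. Setting $x=n$, $y=d$ then gives $x\in A$, $y\in A$, $2^{x}y = 2^{n}d\in A$, which is precisely a monochromatic instance of $\{x,y,2^{x}y\}$ in $A$; since $A\in 2^p\odot p$ was arbitrary, $2^{p}\odot p\models\{x,y,2^{x}y\}$. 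The main obstacle is purely this matching-of-layers argument — making sure the arithmetic progression and its common difference live in the intersection of all the finitely many $p$-sets that the nested definition of $2^p\odot p$ produces — and getting the van der Waerden length right (it must be at least $2^{n}$, or one uses an $n$-fold iteration as in the original Di Nasso--Ragosta proof); the algebra with $\widehat{f}$ for $f(n,m)=2^nm$ is then routine.
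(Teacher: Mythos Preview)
There is a genuine gap. You correctly unwind $A\in 2^{p}\odot p$ to obtain the set $B=\{n\in\N\mid C_{n}\in p\}\in p$ with $C_{n}=\{m\mid 2^{n}m\in A\}$. But from that point on you repeatedly try to pick elements \emph{directly in $A$}: you write ``pick $n\in B\cap A$'', and later want ``$d\in A$'' so that $x=n,\ y=d$ works. The problem is that $A$ lives in $2^{p}\odot p$, not in $p$; there is no reason whatsoever for $A$ to meet $B$, or to meet $C_{n}$, or to be a member of $p$ at all. Every set you can actually control with $p$ (and hence with the van der Waerden hypothesis) is of the form $B$, $C_{n}$, or a finite intersection of such---none of which is contained in $A$. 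You notice the mismatch (``bookkeeping which ultrafilter each set belongs to'') but the proposed fix does not close it: no amount of van der Waerden inside $p$-large sets will place a single element into $A$ unless you manufacture that element as $2^{k}m$ with $m\in C_{k}$.

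The paper's proof (for general base $n$, here $n=2$) never tries to land in $A$ directly. It uses \emph{two} layers of the tensor structure. First pick $a\in A'$ where $A'=\{a\mid A/2^{a}\in p\}\in p$. Then apply the van der Waerden property to $A'\cap A/2^{a}\in p$ to get $b,c,\ b+2^{a}c$ all in that intersection; in particular $b,\ b+2^{a}c\in A'$, so $A/2^{b}$ and $A/2^{\,b+2^{a}c}$ are both in $p$. Finally pick $d$ in their intersection. Now set $x=2^{a}c$ and $y=2^{b}d$: then $x\in A$ because $c\in A/2^{a}$, $y\in A$ because $d\in A/2^{b}$, and $2^{x}y=2^{\,b+2^{a}c}d\in A$ because $d\in A/2^{\,b+2^{a}c}$. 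The point is that $x$ and $y$ are \emph{constructed} as products $2^{(\cdot)}\cdot(\cdot)$ from data living in $p$, rather than selected from $A$. Your one-step scheme ``choose $n$, then choose $d$'' cannot produce this; you need the extra pass that turns membership in $A'$ into new $p$-large quotient sets before choosing the final witness.
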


From this fact, in \cite[Corollary 2.3]{key-5} the authors deduced the exponential Schur's Theorem by the $\log_{2}$-partition method mentioned in the introduction. Their proof actually shows the following fact:

\begin{thm}[\protect{\cite[Theorem 2.3, restated]{key-5}}]\label{DNRrestated} If $p$ is a Van der Waerden's witness then $2^{2^{p}\odot p}\models \{x,y,x^{y}\}$.\end{thm}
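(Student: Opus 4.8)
The plan is to deduce Theorem \ref{DNRrestated} directly from Theorem \cite[Theorem 2.1, restated]{key-5} via the $\log_2$-partition trick, exactly as in \cite[Corollary 2.3]{key-5}, but tracking the ultrafilters carefully so that the witness comes out as $2^{2^{p}\odot p}$. Set $q:=2^{p}\odot p$; by the quoted theorem $q\models\{x,y,2^{x}y\}$. I want to show the ultrafilter $2^{q}$ (i.e. $E_1(2,q)$) witnesses the partition regularity of $\{x,y,x^{y}\}$. So fix $A\in 2^{q}$; by definition of $2^{q}=E_1(2,q)$ this means $\{m\in\N\mid 2^{m}\in A\}\in q$, i.e. $2^{-A}:=\{m\in\N\mid 2^{m}\in A\}\in q$.

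Now apply $q\models\{x,y,2^{x}y\}$ to the set $2^{-A}\in q$: there exist $x,y\in\N$ with $x,y,2^{x}y\in 2^{-A}$. Unravelling, $2^{x}\in A$, $2^{y}\in A$, and $2^{2^{x}y}\in A$. Writing $u:=2^{x}$ and $v:=2^{y}$, we have $u\in A$, $v\in A$, and $2^{2^{x}y}=\bigl(2^{x}\bigr)^{y}=u^{v}$, using $2^{x\cdot y}=(2^{x})^{y}$ and $y=\log_2 v$ — actually more directly $2^{2^x y}=(2^{2^x})^{y}$? No: the clean identity is $u^{v}=(2^{x})^{2^{y}}$, which is not what we get. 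The correct bookkeeping is: we need $u^{v}\in A$ with $u=2^{x},v=y$; indeed $u^{v}=(2^{x})^{y}=2^{xy}$, whereas $2^{x}y$ is the element that lies in $2^{-A}$, giving $2^{2^{x}y}\in A$, and $2^{2^{x}y}\ne 2^{xy}$ in general. Hence the triple to exhibit is $x':=2^{x}$, $y':=2^{y}$? This still does not close. The resolution, which is the one actually used in \cite{key-5}, is to apply the $2^x y$ configuration with the roles so that $2^{x}\cdot y$ plays the part of the exponent of a \emph{second} application of $2^{(-)}$: one shows $2^{q}\models\{x,y,x^{y}\}$ by taking, for $A\in 2^{q}$, the witnesses $2^{x},2^{y}\in A$ and $(2^{x})^{2^{y}}=2^{x\cdot 2^{y}}$; so I instead want $q$ to witness $\{x,y,x\cdot 2^{y}\}$, which is the \emph{commuted} form $2^{y}x$ of the Di Nasso–Ragosta configuration, and is witnessed by $q=2^{p}\odot p$ by the same argument (the configuration $\{x,y,2^{x}y\}$ is symmetric enough, or one re-runs the proof with variables swapped). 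Then $2^{x}\in A$, $2^{y}\in A$, $2^{x\cdot 2^{y}}=(2^{x})^{2^{y}}\in A$, exhibiting the exponential triple $(2^{x},2^{y},(2^{x})^{2^{y}})$ inside $A$.

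So the key steps, in order, are: (i) unpack $A\in 2^{q}=E_1(2,q)$ as $2^{-A}\in q$; (ii) invoke Theorem \cite[Theorem 2.1, restated]{key-5} (in the form $q=2^{p}\odot p\models\{x,y,x2^{y}\}$) to get $x,y$ with $2^{x},2^{y},2^{x\cdot 2^{y}}\in A$; (iii) observe $2^{x\cdot 2^{y}}=(2^{x})^{2^{y}}$, so $(2^{x},2^{y},(2^{x})^{2^{y}})$ is an exponential triple contained in $A$; (iv) conclude $2^{q}\models\{x,y,x^{y}\}$, and note $2^{q}=E_1(2,q)=E_1(2,2^{p}\odot p)=2^{2^{p}\odot p}$ by our notational convention, which is the claim.

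\textbf{Main obstacle.} The delicate point is step (ii): making sure the Di Nasso–Ragosta witness $2^{p}\odot p$ handles the configuration in the precise commuted shape $x\cdot 2^{y}$ (equivalently $2^{y}x$) rather than $2^{x}y$, so that after applying $2^{(-)}$ the three images assemble into $u^{v}$ with $u=2^{x}$, $v=2^{y}$ and not into some unrelated quantity. This is exactly the subtlety already present in the passage from \cite[Theorem 2.1]{key-5} to \cite[Corollary 2.3]{key-5}; one either checks that their proof in fact yields $2^{p}\odot p\models\{x,y,x2^{y}\}$ directly (re-running the tensor-product computation with the two variables exchanged and using that a Van der Waerden witness produces, in any $B\in p$, an arithmetic progression whose common difference and initial term can be placed in either role), or cites their corollary and observes that its proof produces precisely the triple $(2^{x},2^{y},(2^{x})^{2^{y}})$. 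Everything else is routine unwinding of the definitions of $E_1$ and of $2^{-A}$.
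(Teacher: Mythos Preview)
Your final argument (steps (i)--(iv)) is correct and matches the paper's approach; the paper does not prove Theorem~\ref{DNRrestated} separately, but its later proof of Corollary~\ref{nuovoind} (the $n$-version) specializes to exactly this computation for $n=2$.

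The ``main obstacle'' you flag, however, is not a real obstacle, and your proposed resolution is an unnecessary detour. You do \emph{not} need a commuted form $q\models\{x,y,x\,2^{y}\}$ or any re-running of the tensor-product argument. Once you have $a,b\in I_A=2^{-A}$ with $2^{a}b\in I_A$, simply swap the labels when you pass back through $2^{(-)}$: set $x:=2^{b}$ and $y:=2^{a}$ (rather than $x:=2^{a}$, $y:=2^{b}$). Then $x,y\in A$ and
\[
x^{y}=\bigl(2^{b}\bigr)^{2^{a}}=2^{\,b\cdot 2^{a}}=2^{\,2^{a}b}\in A.
\]
This is precisely what the paper does in the proof of Corollary~\ref{nuovoind} (``$x:=n^{b},\ y:=n^{a}$''). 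Of course your version is equivalent --- saying $q\models\{x,y,x2^{y}\}$ is just the relabeling $(x,y)\mapsto(y,x)$ of $q\models\{x,y,2^{x}y\}$ --- but framing it as a genuine difficulty that requires revisiting the proof in \cite{key-5} obscures the fact that the fix is a one-line variable swap.
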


As it is well-known (see e.g. \cite[Theorem 14.5]{key-22}), any $p\in\overline{K(\beta\N,\odot)}$ is a Van der Waerden's witness; moreover, as $p\in\overline{K(\beta\N,\odot)}$, we have that $2^{p}\odot p\in\overline{K(\beta\N,\odot)}$. This simple observation, joint with Theorem \ref{DNRrestated} and Lemma \ref{old1}, allows to prove easily that several configurations are partition regular. In fact, let $p\in\overline{K(\beta\N,\odot)}$, and let $q=2^{p}$. Then:
\begin{itemize}
\item by Theorem \ref{DNRrestated}, $q\models x^{y}=z$; hence, repeated applications of Lemma \ref{old1} show that $q\models x_{1}^{x_{2}^{\udots^{x_{n}}}}=z$;
\item as $p\in\overline{K(\beta\N,\odot)}$, $p\models y_{1}+\ldots+y_{n}=z$, so $q\models y_{1}\cdot\ldots\cdot y_{n}=z$. By repeated applications of Lemma \ref{old1} we get that $q\models x_{1}^{x_{2}^{\udots^{x_{m}}}}=y_{1,1}^{\udots^{y_{1,n_{1}}}}\cdot\ldots\cdot y_{l,1}^{\udots^{y_{l,n_{l}}}}$;
\item by furtherly assuming that $p=p\odot p$, one gets that $q\models x^{\log_{2}(y)}=z$, hence e.g. $q\models x_{1}^{x_{2}\log_{2} x_{3}}=y_{1}\cdot y_{2}^{y_{3}}$;
\item if $p$ is also combinatorially rich\footnote{See e.g. \cite[Section 1.3]{advances} for a definition.}, then $p\models x=y+z^{2}$ (see e.g. \cite[Theorem 2.11]{advances}), hence $q\models x=yt^{\log_{2}(t)}$,
\end{itemize}

and so on, as we can always substitute a variable $x$ with a polynomial $P\left(y_{1},\ldots,y_{n}\right)$ whenever $q\models x=P\left(y_{1},\ldots,y_{n}\right)$. We refer to \cite[Theorem 2.11]{advances} for a long list of such admissible substitutions.

To conclude this section we study some properties of the set of exponential Schur's witnesses, namely

\[\mathrm{ESW}=\{p\in\beta\N\mid \forall A\in p\ \exists x,y\in A\  x^{y}\in A\}.\]

The set $\mathrm{ESW}$ has some nice closure property. To prove them, we will need the following strengthening of Di Nasso and Ragosta's results, whose proofs are almost identical to those of the results in \cite{key-5}. We add them here for completeness.

\begin{thm} Let $n\in\N$. If $p$ is a Van der Waerden's witness then $n^{p}\odot p\models\{x,y,n^{x}y\}$.\end{thm}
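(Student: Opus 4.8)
The plan is to follow the strategy of Di Nasso and Ragosta from \cite{key-5}, generalizing their use of the function $f(n,m)=2^{n}m$ to $f_n(a,m)=n^{a}m$. First I would fix a Van der Waerden's witness $p$, set $q=n^{p}\odot p$, and take an arbitrary $A\in q$. By the definition of $\odot$ and of $n^{p}=E_{1}(n,p)$, membership $A\in n^{p}\odot p$ unwinds to: the set $B=\{k\in\N\mid \{m\in\N\mid km\in A\}\in p\}$ belongs to $n^{p}$, which in turn means $\{a\in\N\mid n^{a}\in B\}\in p$, i.e. $C:=\{a\in\N\mid \{m\in\N\mid n^{a}m\in A\}\in p\}\in p$.

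Next I would exploit that $p$ is a Van der Waerden's witness. For each $a\in C$ let $A_a=\{m\in\N\mid n^{a}m\in A\}\in p$. Intersecting finitely many such sets stays in $p$, so the heart of the argument is to find $x,y$ and a single $m$ that simultaneously realize the configuration. The standard trick is: since $p$ contains $C$ and each $A_a$, and $p$ witnesses van der Waerden's theorem (arithmetic progressions), one picks a suitable arithmetic progression inside the relevant sets so that, writing $x=n^{a}m$ for appropriate $a\in C$ and $m$, the numbers $y$ and $n^{x}y$ can be arranged to lie in $A$ as well. Concretely, following \cite[Theorem 2.1]{key-5}: choose $a\in C$, then inside $A_a\cap(\text{shifted copies})$ use van der Waerden to get $m$ and $d$ with $m, m+ad,\dots$ in the right sets, so that $y:=n^{a}m$ and $x$ chosen to make $n^{x}y=n^{a}(m+\text{something}\cdot d)$ land in $A$; the exponent bookkeeping $n^{x}\cdot n^{a}m = n^{a}\cdot n^{x}m$ is what makes the $n^{a}$-factor factor through cleanly.

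I expect the main obstacle to be the exponent arithmetic in the van der Waerden step: one must arrange that the ``$n^{x}$'' multiplier, where $x$ is itself one of the chosen elements of $A$, interacts correctly with the arithmetic progression produced by van der Waerden's theorem, so that all three of $x$, $y$, $n^{x}y$ end up in $A$. In the original $n=2$ case this is handled by taking $y$ of the form $2^{a}m$ and $x=a$ (so $x$ is the exponent), then finding $m$ with $m$ and $2^{a}m$ (hence $2^{x}y$) both witnessed by $p$; the same works here with $2$ replaced by $n$ since van der Waerden's theorem is insensitive to the base. I would therefore mirror \cite[Theorem 2.1]{key-5} line by line, checking only that no step used a special property of the number $2$ beyond $n\in\N$, $n\ge 2$ (the case $n\le 1$ being trivial or vacuous), and conclude that $x,y,n^{x}y\in A$, so $q\models\{x,y,n^{x}y\}$.

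Finally I would remark that this immediately recovers \cite[Theorem 2.1]{key-5} by taking $n=2$, and that combined with Lemma \ref{old1} and the $\log_n$-partition method it yields $n^{n^{p}\odot p}\models\{x,y,x^{y}\}$, a base-$n$ analogue of Theorem \ref{DNRrestated}, which presumably is the next statement in the paper.
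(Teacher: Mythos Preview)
Your overall plan---unwind $A\in n^{p}\odot p$ to get $A'=\{a\in\N\mid A/n^{a}\in p\}\in p$ (your set $C$), then apply van der Waerden inside a suitable $p$-large set, replacing $2$ by $n$ throughout Di Nasso--Ragosta's argument---is exactly what the paper does. So the strategy is correct and matches the paper.

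However, your sketch of the ``exponent bookkeeping'' is wrong, and if you actually carried out what you wrote you would not get a proof. You claim that in the $n=2$ case one takes $x=a$ and $y=2^{a}m$; but $a$ is merely an element of $C=A'$, and there is no reason for $a$ itself to lie in $A$. Both $x$ and $y$ must be produced as elements of $A$, and this forces both of them to have the shape $n^{(\cdot)}\cdot(\cdot)$ with the second factor in the appropriate $A/n^{(\cdot)}$. Concretely, the paper (following \cite{key-5}) proceeds in two stages: first pick $a\in A'$; then apply van der Waerden \emph{with common difference a multiple of $n^{a}$} inside $A'\cap A/n^{a}\in p$ to obtain $b,c,b+n^{a}c$ all in that set; finally pick $d\in A/n^{b}\cap A/n^{b+n^{a}c}$ and set $x=n^{a}c$, $y=n^{b}d$. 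Then $x\in A$ because $c\in A/n^{a}$, $y\in A$ because $d\in A/n^{b}$, and $n^{x}y=n^{b+n^{a}c}d\in A$ because $d\in A/n^{b+n^{a}c}$. The point you missed is that the van der Waerden step must produce the \emph{three} elements $b,c,b+n^{a}c$ (not just an arithmetic progression of the form $m,m+ad,\dots$), and that $x$ is $n^{a}c$, not $a$.

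Your closing remark is correct: the paper's next statement is indeed the base-$n$ corollary $n^{n^{p}\odot p}\in\mathrm{ESW}$.
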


\begin{proof} Let $A\in n^{p}\odot p$. By definition, this means that $A^{\prime}=\{a\in\N\mid A/n^{a}\in p\}\in p$, where $A/n^{a}=\{m\in\N\mid n^{a}m\in A\}$. Pick $a\in A^{\prime}$. As $A^{\prime}\cap A/n^{a}\in p$, there are $b,c,b+n^{a}c\in A/n^{a}\cap A^{\prime}$. In particular, $A/n^{b}, A/n^{b+n^{a}c}\in p$. Let $d\in A/n^{b} \cap A/{n^{b+n^{a}c}}$. Letting $x=n^{a}c, y=n^{b}d$, we have that $x\in A$ as $c\in A/n^{a}$, $y\in A$ as $d\in A/n^{b}$, and $n^{x}y=n^{n^{a}c}n^{b}d=n^{b+n^{a}c}d\in A$ as $d\in A/n^{b+n^{a}c}$.\end{proof}

\begin{cor}\label{nuovoind} Let $n\in\N$. If $p$ is a Van der Waerden's witness then $n^{n^{p}\odot p}\in\mathrm{ESW}$.\end{cor}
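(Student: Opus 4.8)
The plan is to deduce Corollary~\ref{nuovoind} from the theorem immediately preceding it by exactly the $\log_n$-base partition trick that Di Nasso and Ragosta used to pass from $2^{p}\odot p\models\{x,y,2^{x}y\}$ to $2^{2^{p}\odot p}\models\{x,y,x^{y}\}$. Write $q=n^{p}\odot p$, so that by the preceding theorem $q\models\{x,y,n^{x}y\}$, i.e.\ for every $A\in q$ there are $x,y\in A$ with $n^{x}y\in A$. I want to show that $n^{q}\in\mathrm{ESW}$, that is, for every $B\in n^{q}$ there are $u,v\in B$ with $u^{v}\in B$.

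First I would unwind the definition of $n^{q}$: by the notation fixed after Definition~\ref{expdef}, $n^{q}=E_{1}(n,q)$, so $B\in n^{q}$ means exactly $A:=\{a\in\N\mid n^{a}\in B\}=n^{-B}\in q$. Apply the hypothesis $q\models\{x,y,n^{x}y\}$ to this set $A\in q$: we obtain $x,y\in A$ with $n^{x}y\in A$. Now set $u=n^{x}$ and $v=n^{y}$. Since $x\in A$ we have $u=n^{x}\in B$, and since $y\in A$ we have $v=n^{y}\in B$. Finally $u^{v}=(n^{x})^{(n^{y})}=n^{x\cdot n^{y}}=n^{n^{y}x}$; since $n^{x}y\in A$, i.e.\ $y + n^{x}\cdot? $ — here one must be careful and instead compute directly: $u^{v}=n^{x n^{y}}$, and we need the exponent $x n^{y}$ to lie in $A$. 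So actually the right choice of the auxiliary configuration matters: one should apply the hypothesis in the multiplicative-in-the-exponent form. Concretely, $q\models\{x,y,n^{x}y\}$ gives $x,y$ with $x,y,n^{x}y\in A$; reading $A=n^{-B}$, this says $n^{x},n^{y},n^{n^{x}y}\in B$. Thus taking $u=n^{y}$, $v=n^{x}$ we get $u^{v}=(n^{y})^{n^{x}}=n^{y n^{x}}=n^{n^{x}y}\in B$, while $u=n^{y}\in B$ and $v=n^{x}\in B$. Hence $u,v,u^{v}\in B$, as required, so $n^{q}\in\mathrm{ESW}$.

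The only genuinely delicate point — really the one place to be careful rather than a deep obstacle — is matching the two exponentiations correctly: the configuration $\{x,y,n^{x}y\}$ has the factor $n^{x}$ appearing \emph{in front of} $y$, and under the substitution $a\mapsto n^{a}$ this factor must become the \emph{exponent} of $n^{y}$, which works precisely because $n^{n^{x}y}=(n^{y})^{n^{x}}$. Choosing the roles of the two base variables in the wrong order would instead produce $(n^{x})^{n^{y}}=n^{x n^{y}}$, whose exponent $x n^{y}$ is not among $\{x,y,n^{x}y\}$ in general. Once the bookkeeping is set up as above, the proof is a two-line application of the preceding theorem together with the definition $n^{q}=E_{1}(n,q)$.

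Hence the write-up will be short: recall $n^{q}=E_{1}(n,q)$ so that $B\in n^{q}\iff n^{-B}\in n^{p}\odot p$; apply the preceding theorem to $A=n^{-B}$ to get $x,y$ with $x,y,n^{x}y\in A$; translate back via $a\in A\iff n^{a}\in B$ to get $n^{y},n^{x},n^{n^{x}y}=(n^{y})^{n^{x}}\in B$; conclude with $u=n^{y}$, $v=n^{x}$. No auxiliary lemma beyond the immediately preceding theorem is needed, and in particular Lemma~\ref{old1} is not required here.
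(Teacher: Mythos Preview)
Your proposal is correct and follows essentially the same route as the paper: pull $B\in n^{q}$ back to $A=n^{-B}\in q=n^{p}\odot p$, apply the preceding theorem to get $x,y,n^{x}y\in A$, then push forward via $a\mapsto n^{a}$ and set $u=n^{y},\,v=n^{x}$. Your explicit remark about why the assignment must be $u=n^{y},\,v=n^{x}$ (so that $u^{v}=n^{n^{x}y}$) rather than the reverse is a useful clarification that the paper leaves implicit; you should, however, excise the false start in the middle paragraph from the final write-up.
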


\begin{proof} Let $A\in n^{n^{p}\odot p}$. Let $I_{A}\in \{m\mid n^{m}\in A\}\in n^{p}\odot p$. Let $a,b\in I_{A}$ be such that $n^{a}b\in I_{A}$. Then $x:=n^{b},y:=n^{a}$ are elements of $A$ such that $x^{y}=\left(n^{b}\right)^{\left(n^{a}\right)}=n^{bn^{a}}\in A$. \end{proof}

Our first result is that Corollary \ref{nuovoind} can be easily generalized as follows.

\begin{prop} Let $q\in\beta\N\setminus\{1\}$. If $p$ is a Van der Waerden's witness then $E_{1}\left(q,E_{1}(q,p)\odot p\right)\in \mathrm{ESW}$.
\end{prop}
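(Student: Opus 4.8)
The plan is to mimic the proof of Corollary \ref{nuovoind}, replacing the base $n\in\N$ and the ultrafilter $n^{p}$ by the ultrafilter $q$ and $E_{1}(q,p)$ respectively, and to peel off one exponentiation layer using the defining property of $E_{1}$ together with Lemma \ref{old1}. First I would unwind the outermost $E_{1}$: let $A\in E_{1}\bigl(q,E_{1}(q,p)\odot p\bigr)$. By Definition \ref{expdef}, the set $B=\{n\in\N\mid \{m\in\N\mid n^{m}\in A\}\in E_{1}(q,p)\odot p\}$ lies in $q$, and in particular $B\neq\emptyset$; pick $n\in B$ and set $C=\{m\in\N\mid n^{m}\in A\}\in E_{1}(q,p)\odot p$. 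The goal is then reduced to finding $x,y\in A$ with $x^{y}\in A$, and the natural candidates — just as in Corollary \ref{nuovoind} — are of the form $x=n^{v}$, $y=n^{u}$, so that $x^{y}=n^{vn^{u}}=n^{u\cdot(\text{something})}$; more precisely $\bigl(n^{v}\bigr)^{n^{u}}=n^{v n^{u}}$, which is in $A$ provided $v n^{u}\in C$. So it suffices to produce $u,v\in C$ with $v n^{u}\in C$, because then $x=n^{v}\in A$, $y=n^{u}\in A$ and $x^{y}=n^{vn^{u}}\in A$.

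The heart of the matter is therefore: $C\in E_{1}(q,p)\odot p$ implies there exist $u,v\in C$ with $v\,n^{u}\in C$. Here I would unwind the product $E_{1}(q,p)\odot p$: $C\in E_{1}(q,p)\odot p$ means $C'=\{a\in\N\mid C/a\in p\}\in E_{1}(q,p)$, where $C/a=\{b\in\N\mid ab\in C\}$. Now I want to interpret $C'\in E_{1}(q,p)$: it says $\{k\in\N\mid \{m\in\N\mid k^{m}\in C'\}\in p\}\in q$, so, picking $k$ in this set (and one can arrange $k=n$, or just take any such $k$ — but to get the right arithmetic we actually want to exploit that $p$ is a van der Waerden witness), the set $C'' = \{m\in\N\mid k^{m}\in C'\}$ lies in $p$. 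Since $p$ is a van der Waerden witness, $p\models\{x,x+y,\dots,x+\ell y\}$ for every $\ell$; intersecting $C''$ with $C/(k^{t})$-type sets (which also lie in $p$ for the relevant $t$) and using Lemma \ref{old1} to combine the witnessed linear configuration with membership constraints, one extracts $t, s$ with $t, t+k^{?}s\in C''$ and a common witness landing in the appropriate translates. This is exactly the bookkeeping in the proof of the theorem preceding Corollary \ref{nuovoind}, and the cleanest route is probably to cite that theorem directly: $E_{1}(q,p)\odot p = n^{p}\odot p$ when $q=n$, and for general $q$ one reruns that argument with $E_{1}(q,p)$ in place of $n^{p}$, since the only property of $n^{p}$ used there was that $n^{p}\odot p$ satisfies the $\{x,y,n^{x}y\}$-pattern. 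Concretely: $E_{1}(q,p)\odot p \models \{x,y,k^{x}y\}$ for a suitable base $k$ — and this is what lets us find $u,v,k^{u}v$ all in $C$.

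The step I expect to be the main obstacle is keeping track of which base appears where: in Corollary \ref{nuovoind} the base $n$ is fixed throughout, but here the outer layer contributes a base $n\in B\subseteq\operatorname{supp}(q)$, and the inner layer $E_{1}(q,p)$ also carries a base coming from $q$, and one must make sure the "$v\,n^{u}\in C$" we need is the pattern actually witnessed by $E_{1}(q,p)\odot p$ — i.e.\ that the base in the $\{x,y,(\text{base})^{x}y\}$ configuration matches the $n$ we fixed upstairs, not some other element of $q$. The resolution is that the defining inner set for $E_{1}(q,p)$ can be taken relative to the same $n$: when we chose $n\in B$ we get $C=\{m\mid n^{m}\in A\}\in E_{1}(q,p)\odot p$, and then unwinding $E_{1}(q,p)$ we should \emph{not} pick an arbitrary $k$ but rather observe that the whole argument of the preceding theorem, applied with the ultrafilter $E_{1}(q,p)$, yields $u,v\in C$ with $n^{u}v\in C$ for \emph{every} $n$ in a $q$-large set — in particular for our chosen $n$ — because the van der Waerden witness $p$ is what drives the construction and $n$ only enters as a parameter. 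Once that alignment is in place, the rest is the routine substitution $x=n^{v}$, $y=n^{u}$, $x^{y}=n^{vn^{u}}\in A$, completing the proof that $E_{1}\bigl(q,E_{1}(q,p)\odot p\bigr)\in\mathrm{ESW}$.
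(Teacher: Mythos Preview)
Your approach is the paper's. The paper unfolds both $q$-layers at once, displaying an outer set $I\in q$ and, for each $n\in I$, an inner set $J=J(n)\in q$; it then picks $n\in I\cap J$, which is exactly the statement $A\in n^{n^{p}\odot p}$, and finishes by a direct appeal to Corollary~\ref{nuovoind}. Your detours through $C'$, Lemma~\ref{old1}, and a rerun of the preceding theorem are unnecessary: once the right $n$ is in hand, the single-base corollary does all the work.

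Where your write-up wobbles is precisely the point you yourself flag as the obstacle. You choose $n\in B$ \emph{first} and then assert that the inner $q$-large set coming from unwinding $E_{1}(q,p)$ contains ``in particular our chosen $n$''; but that inner set is built from $C$ and $C'$, which already depend on $n$, so the phrase ``in particular for our chosen $n$'' is circular as stated. The paper sidesteps this by not committing to $n$ until both membership constraints are visible: it writes out $I$ and $J$ in one display and then takes $n\in I\cap J$. Rewriting your argument so that a \emph{single} choice of $n$ simultaneously lands in the outer set $B$ and in the inner $q$-set (rather than choosing $n$ early and then hoping) is what brings it in line with the paper's proof.
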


\begin{proof} Let $A\in E_{1}\left(q,E_{1}(q,p)\odot p\right)$. By definition, 
\[\underbrace{\{n\in\N\mid \overbrace{\{m\in\N\mid \{a\in\N\mid \{b\in\N\mid n^{m^{a}b}\in A\}\in p\}\in p\}}^{J}\in q\}}_{I}\in q.\]
If $n\in I\cap J$ then $A\in n^{n^{p}\odot p}$, and we conclude by Corollary \ref{nuovoind}.\end{proof}

Similarly, a more restrictive condition on $p$ allows to prove the following:
\begin{prop}
\label{main2y} Let $p,q\in\bN\setminus\{1\}$. Assume that, for all $n>1$, $p\models \{x,y,xn^{y}\}$. Then $E_{1}\left(q,p\right)\in \mathrm{ESW}$.\end{prop}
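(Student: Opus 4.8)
The plan is to unwind the definition of $E_{1}(q,p)$ and reduce the task of finding an exponential triple inside an arbitrary $A\in E_{1}(q,p)$ to a single application of the hypothesis on $p$, for one well-chosen base $n$.

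First I would fix $A\in E_{1}(q,p)$ and write, following Definition \ref{expdef}, $I:=\{n\in\N\mid I_{n}\in p\}\in q$, where $I_{n}:=\{m\in\N\mid n^{m}\in A\}$. Since $q\in\beta\N\setminus\{1\}$, the set $\N\setminus\{1\}$ belongs to $q$, so $I\cap(\N\setminus\{1\})\in q$ is nonempty, and I may pick a base $n>1$ with $I_{n}\in p$.

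Next, I would apply the hypothesis to this particular $n$: since $p\models\{x,y,xn^{y}\}$ and $I_{n}\in p$, there are $m_{1},m_{2}\in\N$ with $m_{1},m_{2}\in I_{n}$ and $m_{1}n^{m_{2}}\in I_{n}$. Setting $x:=n^{m_{1}}$ and $y:=n^{m_{2}}$, membership of $m_{1},m_{2}$ in $I_{n}$ gives $x,y\in A$, while the ordinary law of exponents for natural numbers gives $x^{y}=\left(n^{m_{1}}\right)^{n^{m_{2}}}=n^{m_{1}n^{m_{2}}}\in A$, because $m_{1}n^{m_{2}}\in I_{n}$. Hence $A$ contains $x,y$ with $x^{y}\in A$; as $A$ was arbitrary, $E_{1}(q,p)\in\mathrm{ESW}$.

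I do not expect a genuine obstacle here: the whole argument rests on the bookkeeping identity $\left(n^{a}\right)^{n^{b}}=n^{a\,n^{b}}$, which transports the structure $\{x,y,xn^{y}\}$ living one level down (inside $I_{n}$) onto the exponential structure $\{x,y,x^{y}\}$ one level up (inside $A$) along the map $m\mapsto n^{m}$. The only two points that need a moment's care are (i) producing a usable base $n>1$, which is exactly where the assumption $q\neq 1$ is used, and (ii) recalling that the variables of a partition-regular configuration need not be chosen distinct, so nothing beyond plain membership of $m_{1},m_{2}$ in $I_{n}$ is needed. In spirit this is the proof of Corollary \ref{nuovoind}, carried out with an arbitrary ``exponent ultrafilter'' $q$ in place of a principal base.
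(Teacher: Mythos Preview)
Your proof is correct and follows essentially the same approach as the paper's own argument: unwind the definition of $E_{1}(q,p)$, pick a base $n$ from the outer set, apply the hypothesis on $p$ to the inner set $I_{n}$, and push the resulting configuration through $m\mapsto n^{m}$. You are in fact slightly more careful than the paper, which writes ``Let $n\in I$'' without explicitly invoking $q\neq 1$ to secure $n>1$.
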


\begin{proof} Let $A\in E_{1}(q,p)$. By definition, $I=\{n\in \N\mid \{m\in\N\mid n^{m}\in A\}\in p\}\in q$. Let $n\in I$ and $J_{n}=\{ m\in\N\mid n^{m}\in A\}$. Take $a,b\in J_{n}$ with $a,b,an^{b}\in J_{n}$. Therefore 
\[n^{a},n^{b},n^{an^{b}}=\left(n^{a}\right)^{\left(n^{b}\right)}\in A,\]
and we conclude by letting $x=n^{a},y=n^{b}$. \end{proof}

By \cite
[Theorem 4.40]{key-22}, any $q\in\overline{K(\beta\N,\odot)}$ consists solely of multiplicatively piecewise syndetic sets; hence, by Theorem \ref{GosPa}, any such $q$ satisfies that, for all $n>1$, $q\models \{x,y,xn^{y}\}$. Therefore, Theorem \ref{main2y} has the following immediate corollary:

\begin{cor} Let $q\in\overline{K(\beta\N,\odot)}$. Then for all $p\in\bN\setminus\{1\}$ we have that $E_{1}\left(p,q\right)\in \mathrm{ESW}$.\end{cor}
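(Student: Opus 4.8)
The plan is to deduce this corollary directly from Theorem \ref{main2y} (labelled \texttt{main2y} in the excerpt), so the only real work is to verify that an arbitrary $q\in\overline{K(\beta\N,\odot)}$ satisfies the hypothesis ``for all $n>1$, $q\models\{x,y,xn^{y}\}$''. First I would recall, citing \cite[Theorem 4.40]{key-22}, that every ultrafilter in the closure of the smallest ideal $K(\beta\N,\odot)$ consists entirely of multiplicatively piecewise syndetic sets: that is, for $q\in\overline{K(\beta\N,\odot)}$, every $A\in q$ is multiplicatively piecewise syndetic. This is exactly the hypothesis needed to invoke Theorem \ref{GosPa} (Green--Palladino, \cite[Theorem 1.6]{GP}).

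Next I would fix $n>1$ and show $q\models\{x,y,xn^{y}\}$ directly from the definition of $\models$: take any $A\in q$. Since $A$ is multiplicatively piecewise syndetic, Theorem \ref{GosPa} applied with this $n$ gives $x,y\in A$ with $xn^{y}\in A$. As $A\in q$ was arbitrary, this is precisely the statement $q\models\{x,y,xn^{y}\}$. Since $n>1$ was arbitrary, $q$ satisfies the hypothesis of Theorem \ref{main2y}.

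Finally, I would apply Theorem \ref{main2y} with the roles of its two ultrafilters played by: the $q$ of \texttt{main2y} (the one required only to lie in $\bN\setminus\{1\}$) taken to be an arbitrary $p\in\bN\setminus\{1\}$, and the $p$ of \texttt{main2y} (the one required to satisfy the piecewise-syndetic hypothesis) taken to be our $q\in\overline{K(\beta\N,\odot)}$. One should note here that $q\in\overline{K(\beta\N,\odot)}$ is automatically nonprincipal and hence lies in $\bN\setminus\{1\}$, so all hypotheses of Theorem \ref{main2y} are met. The conclusion of that theorem is then $E_{1}(p,q)\in\mathrm{ESW}$, which is exactly what is claimed. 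I do not expect any genuine obstacle: the proof is a two-line bookkeeping argument threading together a known structural fact about $\overline{K(\beta\N,\odot)}$, the Green--Palladino theorem, and Theorem \ref{main2y}; the only point demanding a moment's care is matching the variable names correctly when feeding ultrafilters into Theorem \ref{main2y}, since its statement has $E_{1}(q,p)$ while the corollary writes $E_{1}(p,q)$.
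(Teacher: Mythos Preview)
Your proposal is correct and follows exactly the paper's own argument: invoke \cite[Theorem 4.40]{key-22} to see every member of $q$ is multiplicatively piecewise syndetic, apply Theorem~\ref{GosPa} to get $q\models\{x,y,xn^{y}\}$ for all $n>1$, and then feed this into Theorem~\ref{main2y} with the variables swapped. The only slip is bibliographic: the reference \cite{GP} is Goswami--Patra, not ``Green--Palladino''.
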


\begin{rem} If $\mathrm{ASW}=\{p\in\beta\N\mid \forall A\in p \ \exists a,b\in A \ a+b\in A\}$, Theorem \ref{Sahasr} tells us that $\mathrm{ASW}\cap \mathrm{ESW}=\emptyset$; in particular, if $E(\oplus)=\{p\in\bN\mid p\oplus p=p\}$ then
\[\overline{K(\beta\N,\odot)}\cap \mathrm{ESW}=\emptyset, \overline{E(\oplus)}\cap \mathrm{ESW}=\emptyset,\]
as $\overline{K(\beta\N,\odot)}\cup  \overline{E(\oplus)}\subseteq \mathrm{ASW}$. By Theorem \ref{main2y}.(1), we get that if $q\in\overline{K(\beta\N,\odot)}$ then $E_{1}\left(p,q\right)\notin \overline{K(\beta\N,\odot)}$; in particular, for $p,q\in\overline{K(\beta\N,\odot)}$, $E_{1}(p,q)\notin \overline{K(\beta\N,\odot)}$: the smallest bilateral multiplicative ideal of $\beta\N$ is very much not closed under the $E_{1}$ exponentiation.\end{rem}

\subsection{Non existence of exponential idempotents}\label{tr}

Having introduced two natural binary operations on $\beta\N$, it makes sense to ask if they admit idempotents, defined as follows.

\begin{defn} We say that $p\in\beta\N$ is $E_{1}$-idempotent (resp. $E_{2}$-idempotent) if $E_{1}(p,p)=p$ (resp. if $E_{2}(p,p)=p$).\end{defn}

 First of all, since $E_{1},E_{2}$ are not associative, we cannot approach the existence of idempotents problem via the usual method of applying Ellis' Theorem (see \cite[Theorem 2.5]{key-22} for details).  Moreover, since $1$ is the only principal $E_{1}-$idempotent, and the only principal $E_{2}$-idempotent as well, from now on we will use ``idempotent'' to actually mean ``nonprincipal idempotent''. Our starting  observation is the following:

\begin{prop}\label{stupid} Assume that there exist $p,q\in\beta\N\setminus\{1\}$ such that $p=p\odot E_{1}(q,p)$. Then $E_{1}(q,p)$ is $E_{1}$-idempotent. 

Analogously, if there exist $p,q\in\beta\N\setminus\{1\}$ such that $p=E_{2}(p,q)\odot p$ then $E_{2}(p,q)$ is $E_{2}$-idempotent.\end{prop}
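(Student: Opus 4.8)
The plan is to read both assertions off the partial-associativity identities for $E_1$ and $E_2$ recorded in Proposition \ref{alg exp}, parts (5) and (6). Those two identities are stated there for ultrafilters on $\beta\R^+_d$, but their proofs use nothing beyond the elementary equality $n^{tz}=(n^t)^z$ for natural numbers; so the identities
\[E_1(p, q\odot r) = E_1\big(E_1(p,q), r\big), \qquad E_2(p\odot q, r) = E_2\big(p, E_2(q,r)\big)\]
hold verbatim for all $p,q,r\in\beta\N$ (one may simply repeat the three-line computation from the proof of Proposition \ref{alg exp} with $\R^+$ replaced by $\N$). I would record this observation first, then apply it.

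For the first statement, put $r:=E_1(q,p)$ and apply the $E_1$-identity with first argument $q$, first factor $p$ and second factor $r$:
\[E_1\big(q,\, p\odot r\big) = E_1\big(E_1(q,p),\, r\big) = E_1(r,r).\]
The hypothesis says $p\odot E_1(q,p)=p$, i.e. $p\odot r = p$, so the left-hand side equals $E_1(q,p)=r$. Hence $E_1(r,r)=r$, which is precisely the statement that $r=E_1(q,p)$ is $E_1$-idempotent. For the second statement, put $s:=E_2(p,q)$ and apply the $E_2$-identity with first factor $s$, second factor $p$, last argument $q$:
\[E_2\big(s\odot p,\, q\big) = E_2\big(s,\, E_2(p,q)\big) = E_2(s,s).\]
The hypothesis $E_2(p,q)\odot p = p$ reads $s\odot p = p$, so the left-hand side equals $E_2(p,q)=s$; therefore $E_2(s,s)=s$, i.e. $s=E_2(p,q)$ is $E_2$-idempotent.

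There is essentially no obstacle here beyond choosing the correct substitution into Proposition \ref{alg exp}.(5)--(6); the only point deserving a sentence of justification is the transfer of those two identities from $\beta\R^+_d$ to $\beta\N$, and that is immediate because the argument involves no division and no fractional exponents. (This is consistent with the ``\texttt{stupid}'' label: the content is just that, in both cases, the hypothesis collapses the inner product $p\odot r$, respectively $s\odot p$, down to $p$, and then associativity closes the loop.)
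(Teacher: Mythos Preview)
Your proof is correct and is exactly the paper's own argument: substitute the hypothesis into Proposition \ref{alg exp}.(5) (resp.\ (6)) to collapse $E_1(q,p\odot E_1(q,p))$ to $E_1(q,p)$ and then rewrite it as $E_1(E_1(q,p),E_1(q,p))$ (and analogously for $E_2$). Your extra remark on transferring the identities from $\beta\R^+_d$ to $\beta\N$ is a fair point the paper leaves implicit.
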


\begin{proof} We just have to apply Proposition \ref{alg exp}.(5)-(6). In fact, if $p=p\odot E_{1}(q,p)$ then
\[E_{1}(q,p)=E_{1}(q,p\odot E_{1}(q,p))=E_{1}\left(E_{1}(q,p),E_{1}(q,p)\right)\]
and, if $p=E_{2}(p,q)\odot p$ then
\[E_{2}(p,q)=E_{2}\left(E_{2}(p,q)\odot p, q\right)=E_{2}\left(E_{2}(p,q),E_{2}(p,q)\right).\qedhere\]
\end{proof}

In particular, the existence of $n\in\N_{>1}$ and $p\in\beta\N\setminus\N$ with $p\odot n^{p}=p$ would force $n^{p}$ to be an $E_{1}$-idempotent; similarly, if $n^{p}\odot p=p$ then $n^{p}$ would be $E_{2}$-idempotent. 

However, idempotents cannot exist for $E_{1}$, and cannot exist for $E_{2}$ under certain additional assumptions, as we are now going to show. As for $E_{1}$, the non existence of idempotents follows from an even stronger fact, that we now prove.

\begin{thm}\label{noid} For all $p\in\beta\N, q\in\beta\N\setminus\N$ we have that\footnote{Notice, by contrast, that in Proposition \ref{alg exp} we proved that the apparently similar equation $E_{1}(p,q)=p$ has infinitely many solutions.} $E_{1}(p,q)\neq q$; in particular, there are no $E_{1}-$idempotents ultrafilters in $\beta\N\setminus\N$. \end{thm}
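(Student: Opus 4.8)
The plan is to find a combinatorial invariant that behaves very differently on $q$ and on $E_1(p,q)$, forcing them apart whenever $q$ is nonprincipal. The natural candidate is the function $\Omega:\N\to\N$ sending $n$ to the number of prime factors of $n$ counted with multiplicity (equivalently, the sum of exponents in the prime factorization), which satisfies $\Omega(n^m)=m\cdot\Omega(n)$ for all $n,m\in\N$ with $n>1$, and $\Omega(1^m)=0$. First I would compute $\widehat{\Omega}$ of $E_1(p,q)$: since $E_1(p,q)=\widehat{e_1}(p\otimes q)$ with $e_1(n,m)=n^m$, continuity gives $\widehat{\Omega}\bigl(E_1(p,q)\bigr)=\widehat{\Omega\circ e_1}(p\otimes q)$, and $\Omega(e_1(n,m))=m\,\Omega(n)$ away from the (measure-negligible, from the ultrafilter viewpoint) set where $n=1$. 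This should let me identify $\widehat{\Omega}(E_1(p,q))$ with a product-type ultrafilter built from $q$ (the "outer" variable $m$) and $\widehat{\Omega}(p)$ — roughly $\widehat{\Omega}(E_1(p,q)) = q\odot\widehat{\Omega}(p)$ when $\widehat{\Omega}(p)$ is principal, and a more genuinely iterated object otherwise. The key asymmetry is that the outer ultrafilter $q$ appears as a multiplicative "factor on the left", so $\widehat{\Omega}(E_1(p,q))$ lies in a multiplicative translate/product governed by $q$, whereas $\widehat{\Omega}(q)$ need have no such structure.

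To turn this into a contradiction, suppose $E_1(p,q)=q$. Applying $\widehat{\Omega}$ gives $\widehat{\Omega}(q)=\widehat{\Omega}(E_1(p,q))$. I expect two cases, paralleling the proof of Proposition~\ref{neqr}. If $\widehat{\Omega}(p)=i\in\N$ is principal, then the right-hand side simplifies to $i\odot q$ (a multiplicative dilation of $q$), so we would need $\widehat{\Omega}(q)=i\odot q$; iterating, $\widehat\Omega$ applied repeatedly to $q$ stays equal to $q$ up to dilation, and since $q$ is nonprincipal one can derive a contradiction from the fact that $\Omega(n)$ is typically much smaller than $n$ — more precisely, from a growth/size comparison or from Theorem~\ref{TMal}-style non-solvability of the relevant ultrafilter equation. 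If instead $\widehat{\Omega}(p)\notin\N$, then $\widehat\Omega(E_1(p,q))$ is an iterated product whose "outer layer" is $q$ and "inner layer" is $\widehat\Omega(p)$; composing once more, the equation $\widehat\Omega(q)=\widehat\Omega(E_1(p,q))$ again contradicts a theorem on the non-existence of solutions to ultrafilter equations mixing different operations (Theorem~\ref{TMal} or Theorem~\ref{HSsp}), since the left side involves only $q$ while the right side involves a strictly "deeper" iterate.

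An alternative, possibly cleaner route: use the largest-prime-factor function $F:\N\to\N$, for which $F(n^m)=F(n)$ and hence $\widehat F(E_1(p,q))=\widehat F(p)$ does \emph{not} depend on $q$ at all. If $E_1(p,q)=q$ then $\widehat F(q)=\widehat F(p)$, which pins down $\widehat F(q)$ but gives no immediate contradiction by itself; combining $F$ with $\Omega$ (or with the function $H(n)=F(n)^{G(n)}$ from the proof of Proposition~\ref{neqr}, which records the full power of the largest prime) should then yield the contradiction, because $H(E_1(p,q))$ has the form $\bigl(\widehat H(p)\bigr)$ raised to the $q$-th power in the $E_1$-sense, an object that cannot equal $\widehat H(q)$ when $q$ is nonprincipal by the corollary to Proposition~\ref{alg exp} (namely $p^a=p^b\Rightarrow a=b$, suitably promoted) together with Theorem~\ref{TMal}.

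The main obstacle I anticipate is the case $\widehat{\Omega}(p)\notin\N$ (equivalently, handling arbitrary nonprincipal $p$ on the base): there the invariant $\widehat\Omega(E_1(p,q))$ is not a simple dilation but a genuinely iterated ultrafilter operation, and ruling out $\widehat\Omega(q)$ equalling it requires invoking exactly the right non-solvability result. I expect the endgame to rest on Theorem~\ref{TMal} (no cancellation across $\oplus$ with distinct multiplicative coefficients) or on a direct growth argument showing that a nonprincipal ultrafilter cannot be fixed by "take $\Omega$ then dilate", since $\Omega(n)=o(n)$ in a strong enough sense to violate any such fixed-point relation on a nonprincipal ultrafilter.
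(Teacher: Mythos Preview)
Your approach is genuinely different from the paper's, and it has a real gap.

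The paper's proof is essentially a three-line application of a combinatorial lemma of Sahasrabudhe (\cite[Lemma~23]{key-4}): there is a $4$-colouring of $\N$ admitting no monochromatic pair $\{b,a^{b}\}$ with $b\ge\log_{2}a$ and $a,b>1$. Assuming $E_{1}(p,q)=q$, one takes the colour class $A_{i}\in q$; then $I_{n}:=\{m:n^{m}\in A_{i}\}\in q$ for some $n>1$, so $A_{i}\cap I_{n}\in q$ contains some $m>\log_{2}n$, and $\{m,n^{m}\}\subseteq A_{i}$ is the forbidden pair. No arithmetic invariants, no ultrafilter-equation machinery.

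Your computation is essentially correct: from $\Omega(n^{m})=m\,\Omega(n)$ one gets $\widehat{\Omega}\bigl(E_{1}(p,q)\bigr)=\widehat{\Omega}(p)\odot q$ (note the order --- $\widehat{\Omega}(p)$ sits on the \emph{left}), so $E_{1}(p,q)=q$ would force $\widehat{\Omega}(q)=\widehat{\Omega}(p)\odot q$. The gap is the endgame. Neither Theorem~\ref{TMal} nor Theorem~\ref{HSsp} applies to an equation of the shape $\widehat{\Omega}(q)=u\odot q$: the former separates $u\oplus ap$ from $v\oplus bp$, the latter separates a \emph{sum} $q\oplus p$ from a \emph{product} $s\odot r$; your equation fits neither template. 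When $\widehat{\Omega}(p)=i\in\N$ is principal one can indeed push through a growth/rank argument (iterate a map $f$ with $f(n)<n$ and derive a shift-invariance contradiction for the induced rank ultrafilter), but you have not supplied it, and the ``Theorem~\ref{TMal}-style'' hint does not do the job. In the nonprincipal case --- precisely the obstacle you flag --- I do not see how to refute $\widehat{\Omega}(q)=u\odot q$ with the tools you name, and your alternative route via $H$ only reproduces the original problem with $p$ replaced by $\widehat{H}(p)$: since $H(n^{m})=H(n)^{m}$, one gets $\widehat{H}(q)=E_{1}\bigl(\widehat{H}(p),q\bigr)$, which is the same equation one started from.
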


\begin{proof} By \cite[Lemma 23]{key-4}, there is a $4$-coloring $\N=A_{1}\cup\ldots\cup A_{4}$ of $\N$ such that there is no monochromatic pair $b,a^{b}$ with $\log_{2} a\leq b$ and $a,b> 1$. 

Now assume that $E_{1}(p,q)=q$; by definition, this means that for all $A\subseteq \N$ we have that 
\[A\in q\Leftrightarrow \{n\in\N\mid\{m\in\N\mid n^{m}\in A\}\in q\}\in p.\]
Now take $A_{i}\in\{A_{1},\ldots, A_{4}\}$ such that $A_{i}\in q$. In particular, $B=\{n\in\N\mid\{m\in\N\mid n^{m}\in A_{i}\}\in q\}\neq \emptyset$, so we can pick $n\in B$. Let $I_{n}:=\{m\in \N\mid n^{m}\in A_{i}\}$. As $q$ is non principal, $I_{n}$ must contain some $m>\log_{2}(n)$. But $m,n^{m}\in A_{i}$ with $\log_{2}(n)\leq m$, which is a contradiction. \end{proof}

Putting together Theorem \ref{noid} and Proposition \ref{stupid}, we get the following:

\begin{cor} For all $n>1\in\N$ and $p,q\in\beta\N\setminus\{0,1\}$ we have that $p\odot n^{p}\neq p$  and $p\odot E_{1}(q,p)\neq p$. \end{cor}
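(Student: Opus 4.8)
The plan is to derive both non-equalities directly from the two results just established, Theorem \ref{noid} and Proposition \ref{stupid}, by contraposition. The statement has two parts: first that $p \odot n^p \neq p$ for $n > 1$ and $p \in \beta\N\setminus\{0,1\}$, and second that $p \odot E_1(q,p) \neq p$ for $p,q \in \beta\N\setminus\{0,1\}$. Note that the first part is in fact a special case of the second (take $q = n$, using the notational convention $n^p = E_1(n,p)$), so strictly speaking I would only need to prove the second and then remark that the first follows; but it may be cleaner to just record both.

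First I would handle the principal case. If $p \in \N$, $p > 1$: then $p \odot n^p = p$ would require $n^p$ to equal a principal ultrafilter, hence $n^p \in \N$, and then $p \cdot n^p = p$ forces $n^p = 1$, impossible since $n, p > 1$ give $n^p > 1$. (If one wants to be careful about $p=0$ being excluded in the hypothesis, there is nothing to check there.) Similarly for $E_1(q,p)$: if both $p,q$ are principal and $> 1$ this reduces to the same arithmetic contradiction; if say $q$ is nonprincipal, $E_1(q,p)$ is nonprincipal, and then $p \odot E_1(q,p)$ is nonprincipal while... hmm, actually $p$ could be nonprincipal too, so this is not automatic — better to route everything through the main argument rather than case-split on principality.

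The cleanest route: suppose toward a contradiction that $p \odot E_1(q,p) = p$ for some $p, q \in \beta\N\setminus\{0,1\}$. I claim $p$ must be nonprincipal — indeed if $p \in \N_{>1}$ then reading off the $\odot$-action shows $E_1(q,p)$ would have to be a principal ultrafilter dividing appropriately, and one checks $E_1(q,p) \in \N$ forces $q \in \N$ and then $q^p \cdot p = p$ is impossible. So $p \in \beta\N\setminus\N$. Now Proposition \ref{stupid} applies (its hypothesis is exactly $p = p \odot E_1(q,p)$, with $p, q \neq 1$, which holds since $0 \notin \beta\N\setminus\{1\}$ is not an issue — $q \neq 1$ is given, and if $q = 0$ then $E_1(q,p)$... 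I should double-check $q \ne 0$ is covered, which it is by the hypothesis $q \in \beta\N\setminus\{0,1\}$), giving that $E_1(q,p)$ is $E_1$-idempotent, i.e. $E_1(E_1(q,p), E_1(q,p)) = E_1(q,p)$. But then setting $r := E_1(q,p)$, if $r$ is nonprincipal this contradicts Theorem \ref{noid} (take $p := r$, $q := r$ there: $E_1(r,r) \neq r$); and $r$ cannot be principal because, with $p$ nonprincipal, $p = p \odot r$ forces $r$ nonprincipal (the product of something by a principal ultrafilter $k$ lies in $k\N$... actually more simply: if $r = k \in \N$ then $p \odot k = p$ means $pk = p$ in $\beta\N$, i.e. the map $x \mapsto kx$ fixes the nonprincipal $p$, forcing $k = 1$, excluded). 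For the first part, apply the second with $q = n$.

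The main obstacle is essentially bookkeeping rather than mathematics: one must be careful that all the side hypotheses of Proposition \ref{stupid} and Theorem \ref{noid} ($p, q \neq 1$, $q$ nonprincipal, etc.) are genuinely met, in particular ruling out the degenerate possibility that $E_1(q,p)$ collapses to a principal ultrafilter, and handling the $p \in \N$ case of the hypothesis (which the corollary's statement does allow, since it only excludes $0$ and $1$). None of these require new ideas — each is a one-line observation about the action of $\odot$ by a principal ultrafilter — but they should be stated explicitly. I expect the whole proof to be four or five lines once these checks are organized.
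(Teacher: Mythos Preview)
Your proposal is correct and follows exactly the route the paper takes: the corollary is stated immediately after Theorem \ref{noid} with the one-line justification ``Putting together Theorem \ref{noid} and Proposition \ref{stupid}'', and your argument is precisely that contrapositive. Your additional bookkeeping for the principal cases (ruling out $p\in\N_{>1}$ and $E_1(q,p)\in\N$) is sound and arguably fills in details the paper leaves implicit; note in particular that $E_1(q,p)=1$ forces $q=1$, and $p\odot k=p$ with $p$ nonprincipal and $k\in\N$ forces $k=1$ via the standard ``no nontrivial shift fixes an ultrafilter'' argument applied to $\widehat{\Omega}(p)$.
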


Naively, from Theorem \ref{noid} it would be natural to imagine that some similar argument could be used to prove the following conjecture:

\begin{con}\label{con} $E_{2}$-idempotent ultrafilters do not exist. \end{con}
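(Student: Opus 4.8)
The plan is to adapt the combinatorial colouring argument of Theorem \ref{noid} to the $E_2$ setting. Recall that $E_2(p,p)=p$ means that for all $A\subseteq\N$,
\[
A\in p \iff \{n\in\N\mid \{m\in\N\mid m^{n}\in A\}\in p\}\in p.
\]
The natural first step is to seek a finite colouring $\N=A_1\cup\cdots\cup A_k$ with the property that there is no monochromatic pair $(n,m^{n})$ (equivalently $(x,y^{x})$ with $x,y$ in the same colour class and satisfying some size constraint). If such a colouring exists with the constraint being of the form ``$n$ large compared to $m$'' or ``$m$ large compared to $n$'', then picking the colour class $A_i\in p$ and running the same extraction as in Theorem \ref{noid} — choose $n$ in the nonempty set $B=\{n\mid \{m\mid m^{n}\in A_i\}\in p\}$, then use nonprincipality of $p$ to find $m$ in the appropriate range with $m^{n}\in A_i$ — would yield a monochromatic forbidden pair, a contradiction. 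The key difference from Theorem \ref{noid} is that now the \emph{base} $m$ is the quantified-over variable and the \emph{exponent} $n$ is fixed first; so one needs a colouring avoiding monochromatic $(n,m^n)$, and the size constraint one can hope to impose is on $m$ relative to the already-chosen $n$.

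The main obstacle, and the reason this is only a conjecture, is precisely the existence of such a colouring. In Theorem \ref{noid} the relevant statement — a finite colouring of $\N$ with no monochromatic $b, a^{b}$ for $a,b>1$ and $\log_2 a\le b$ — is \cite[Lemma 23]{key-4}, and its proof exploits that $a^b$ grows fast in $b$; here the asymmetry runs the other way and $m^{n}$ is a polynomial in $m$ of fixed degree $n$, so the analogous colouring, if it exists, would have to defeat all the configurations $\{x, x^{n}\}$ simultaneously as $n$ ranges over $\N$. One would want a colouring with no monochromatic pair $m, m^{n}$ whenever $m$ is sufficiently large relative to $n$; a candidate is a colouring based on the position of $\log\log m$ modulo a suitable scale, arranged so that $m$ and $m^{n}=$ (something with $\log\log m^{n}=\log\log m+\log n$) land in different classes once $n$ is small compared to $m$ — but making this robust against the quantifier structure of the ultrafilter equation (where $n$ is chosen first and $m$ only afterwards, with $m$ in an arbitrary set of $p$) is exactly where the difficulty lies, since $p$ could concentrate on $m$'s for which $n$ is \emph{not} small relative to $m$.

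If a direct colouring cannot be found, a fallback approach is to rule out $E_2$-idempotents under structural hypotheses on $p$, in the spirit of the partial result already announced in the introduction (no $p$ is simultaneously an $E_2$-idempotent and a multiplicative idempotent). For instance, using Proposition \ref{alg exp}.(6), if $p$ is multiplicatively idempotent then $E_2(p,p)=E_2(p,E_2(p,p))$, and combining this self-similarity with the images of $p$ under the largest-prime-factor map $\widehat F$ and the auxiliary maps $\Omega, G, H$ used in the proof of Proposition \ref{neqr} should force contradictory identities via Theorem \ref{TMal} or Theorem \ref{HSsp}. A complete resolution of the conjecture, however, would most plausibly come from the colouring route, and I expect constructing that colouring — or proving it does not exist, which would refute the conjecture — to be the crux of the matter.
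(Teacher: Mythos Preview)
Your proposal is not a proof, and you are candid about this; but neither does the paper prove the statement. Conjecture \ref{con} is left open: the paper explicitly says ``Unfortunately, we have not been able to prove Conjecture \ref{con}; we can, however, show a partial result towards its solution.'' So there is no proof in the paper to compare against.

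That said, your analysis of the obstruction is accurate and matches the paper's implicit reasoning. The colouring argument behind Theorem \ref{noid} relies on \cite[Lemma 23]{key-4}, which forbids monochromatic pairs $b,a^{b}$ with the \emph{exponent} $b$ large relative to the base; in the $E_{2}$ setting the quantifier order forces you to fix the exponent $n$ first and then vary the base $m$, so you would need a colouring forbidding monochromatic $m,m^{n}$ with $m$ large relative to $n$ --- and no such colouring is known. Your remark that $m\mapsto m^{n}$ is only polynomial for fixed $n$ is exactly why the Sahasrabudhe-type construction does not transfer.

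Your fallback suggestion --- rule out $E_{2}$-idempotents under extra structural hypotheses, using the maps $\Omega,F,G,H$ from Proposition \ref{neqr} together with Theorems \ref{TMal} and \ref{HSsp} --- is precisely what the paper does in its partial result: it shows that no nonprincipal $p$ can be simultaneously multiplicatively idempotent and $E_{2}$-idempotent, by pushing forward along $\widehat{\Omega}$ and invoking Theorem \ref{HSsp}. So your instincts about both the main obstacle and the available partial workaround are aligned with the paper; the conjecture itself remains genuinely open.
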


As expected, a positive answer to the above conjecture would have several interesting combinatorial consequences: for example, any set belonging to such an ultrafilter would be exponentially-IP, in the following sense. 

\begin{thm}\label{exip} Let $p\in\beta\N$ such that $p = E_{2}(p, p)$. For each $A \in p$, there is a sequence $\langle x_{n}\rangle_{n=1}^{\infty}$ in $\N$ such that $\{x_{n}\mid n\in\N\}\cup\{x_{n+1}^{y}\mid n\in\N\ \text{and} \ y\in FP(\langle x_{t}\rangle_{t=1}^{n})\}\subseteq A$.\end{thm}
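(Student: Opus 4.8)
The plan is to transcribe the Galvin--Glazer proof of Hindman's theorem from idempotents of $(\beta\N,\oplus)$ to the (non-associative) setting of $E_{2}$. For $C\subseteq\N$ and $s\in\N$ put $C\downarrow s:=\{m\in\N\mid m^{s}\in C\}$. Two facts drive the recursion: first, the composition law $C\downarrow(st)=(C\downarrow t)\downarrow s$ for all $s,t\in\N$, which is nothing but $(m^{s})^{t}=m^{st}$; second, for every $C\in p$ the set $C^{\flat}:=\{s\in\N\mid C\downarrow s\in p\}$ again lies in $p$. The second fact is immediate from the hypothesis: unwinding Definition \ref{expdef}, membership $C\in E_{2}(p,p)$ says precisely that $C^{\flat}\in p$, and $p=E_{2}(p,p)$.

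I would then build $\langle x_{n}\rangle_{n\geq 1}$ recursively, maintaining the following invariant at stage $n$: writing $W_{n}:=\{1\}\cup FP(\langle x_{t}\rangle_{t=1}^{n})$ (so $W_{0}=\{1\}$), one has $A\downarrow y\in p$ for every $y\in W_{n}$. The base case is $A\downarrow 1=A\in p$. For the step, having chosen $x_{1},\dots,x_{n}$, set $E_{n}:=\bigcap_{y\in W_{n}}\bigl((A\downarrow y)\cap(A\downarrow y)^{\flat}\bigr)$. By the invariant each $A\downarrow y$ lies in $p$, hence so does each $(A\downarrow y)^{\flat}$ by the second fact above, and since $W_{n}$ is finite we get $E_{n}\in p$; in particular $E_{n}\neq\emptyset$, and when $p$ is nonprincipal $E_{n}$ is even infinite, so we may pick $x_{n+1}\in E_{n}$ (distinct from $x_{1},\dots,x_{n}$ in that case). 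From $x_{n+1}\in A\downarrow y$ for all $y\in W_{n}$ we obtain $x_{n+1}^{y}\in A$ for all $y\in W_{n}$; taking $y=1$ gives $x_{n+1}\in A$, and letting $y$ range over $FP(\langle x_{t}\rangle_{t=1}^{n})$ gives the exponential part of the claim. From $x_{n+1}\in(A\downarrow y)^{\flat}$ and the composition law we get $A\downarrow(x_{n+1}y)=(A\downarrow y)\downarrow x_{n+1}\in p$ for all $y\in W_{n}$; since $W_{n+1}=W_{n}\cup x_{n+1}W_{n}$, this, combined with the stage-$n$ invariant, is exactly the stage-$(n+1)$ invariant. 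Iterating produces the desired sequence.

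The step that needs the most care is the one where the exponential case genuinely diverges from the classical one. The usual proof routes everything through $A^{\star}=\{x\in A\mid -x+A\in p\}$ and the lemma ``$x\in A^{\star}\Rightarrow -x+A^{\star}\in p$'', whose proof rewrites $-(x+y)+A=-y+(-x+A)$ using associativity and then applies $p\oplus p=p$ to $-x+A$. The analogue here would need $A\downarrow(m^{b})$ to be rewritten via the composition law, but $m^{b}$ is a power, not a product, so the rewriting is unavailable and the would-be lemma fails. The fix, as above, is to discard $A^{\star}$ and instead propagate the entire finite family of conditions $\{A\downarrow y\in p\mid y\in W_{n}\}$ through the recursion; the composition law is then applied only to genuine products $x_{n+1}y$ of sequence terms, which is exactly what the set identity $W_{n+1}=W_{n}\cup x_{n+1}W_{n}$ supplies. (The unique principal solution $p=1$ of $E_{2}(p,p)=p$ is handled by the same recursion and yields the constant sequence $x_{n}=1$.)
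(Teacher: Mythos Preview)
Your proof is correct and follows the same Galvin--Glazer strategy as the paper, but the bookkeeping is organized differently. You carry the entire finite family $\{A\downarrow y : y\in W_n\}$ explicitly and intersect over all of it at each stage; the paper instead maintains a single decreasing chain $A_1\supseteq A_2\supseteq\cdots$ in $p$, setting $A_{n+1}=A_n\cap A_n^{\flat}\cap (A_n\downarrow x_n)$ in your notation, and defers the work to a short induction on $|F|$ at the end showing $x_{n+1}^{y}\in A_{\min F}$ for $y=\prod_{t\in F}x_t$. Your version makes the invariant transparent and needs no final induction, at the cost of intersecting $\sim 2^n$ sets rather than three; the paper's version is the classical Hindman-style presentation. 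Your third-paragraph diagnosis is also on point: the one-line ``$A^\star$'' lemma genuinely fails for $E_2$, and both arguments circumvent it---yours by explicit tracking, the paper's by the decreasing chain plus the $|F|$-induction---so that in each case the composition law is applied only to genuine products $x_m\cdot z$, never to powers.
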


\begin{proof} For $A\subseteq \N$ and $x\in\N$, let $D(A, x) = \{y \in \N \mid y^{x}\in A\}$; for each $A\in p$, $\{x\in \N\mid D(A, x) \in p\}\in p$ by $E_{2}$-idempotency of $p$.

Let $A\in p$, let $A_{1} = A$, and let $B_{1} = \{x\in \N \mid D(A_{1}, x)\in p\}$. Pick $x_{1}\in A_{1}\cap B_{1}$ and let $A_{2} = A_{1}\cap B_{1} \cap D(A_{1}, x_{1})$. 

Now let $n\in \N\setminus\{1\}$ and assume that $A_{n}\in  p$. Let $B_{n} = \{x \in \N \mid D(A_{n}, x) \in p\}$, pick $x_{n}\in A_{n}\cap B_{n}$, and let $A_{n+1} = A_{n} \cap B_{n}\cap D(A_{n}, x_{n})$.

Having chosen $\langle x_{n}\rangle_{n=1}^{\infty}$, it is immediate that $\{x_{n}\mid  n \in \N\}\subseteq A$. Now let $n\in\N$. We
show by induction on $|F|$ that for $\emptyset\neq F\subseteq\{1,2,\dots,n\}$, if $m = \min F$ and $y =\prod_{t\in F}x_{t}$, then $x_{n+1}^{y}\in A_{m}$. If $F = \{m\}$, then $x_{n+1}\in A_{m+1}\subseteq D(A_{m}, x_{m})$ so $x_{n+1}^{x_m}\in A_{m}$. Now
assume that $|F| > 1$, let $G = F \setminus\{m\}$, let $r = \min G$, and let $z =\prod_{t\in G} x_{t}$. Then $x_{n+1}^{z}\in A_{r}\subseteq A_{m+1}\subseteq D(A_{m},x_{m})$ so $x_{n+1}^{zx_{m}}=\left(x_{n+1}^{z}\right)^{x_{m}}\in A_{m}$. \end{proof}

Unfortunately, we have not been able to prove Conjecture \ref{con}; we can, however, show a partial result towards its solution. The basic observation is the following: by Proposition \ref{alg exp}, if $p$ is $E_{2}$-idempotent, we have that 
\[p=E_{2}(p,p)=E_{2}(p,E_{2}(p,p))=E_{2}(p\odot p,p).\]
As, for all $n,m\in\N,p\in \beta\N\setminus\N$ one has that $E_{2}(n,p)=E_{2}(m,p)\Leftrightarrow n=m$, it becomes natural to ask if it is possible to have $p\in\beta\N\setminus\N$ such that $p=p\odot p=E_{2}(p,p)$; the answer is no.

\begin{thm} There is no ultrafilter $p\in\beta\N\setminus \N$ that is multiplicatively and $E_{2}$-idempotent. \end{thm}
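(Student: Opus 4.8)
Suppose towards a contradiction that $p \in \beta\N \setminus \N$ satisfies $p = p \odot p = E_2(p,p)$. The idea is to play the two idempotency hypotheses against each other by extracting arithmetic invariants of $p$ that must be compatible with both the multiplicative structure and the exponential structure, and derive a contradiction by a size/primality argument analogous to the one used in Proposition \ref{neqr} and Theorem \ref{noid}. First I would observe that since $p = p \odot p$ is a nonprincipal multiplicative idempotent, every $A \in p$ is multiplicatively piecewise syndetic (in fact more), and in particular $p$ contains no singletons and $p$ must concentrate on non-squarefree numbers with unbounded $\Omega$-value (sum of prime exponents): if $\widehat{\Omega}(p) = j \in \N$ then $p$ would be concentrated on the finite-for-each-prime-support... more carefully, one shows $\widehat{\Omega}(p) \notin \N$ exactly as in Case 1 of Proposition \ref{neqr}, because a multiplicative idempotent cannot be concentrated on a set with bounded $\Omega$.

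The core of the argument is then to compare $\widehat{\Omega}$ (or a more refined statistic) evaluated on the two sides of $p = E_2(p\odot p, p)$. For natural numbers $n, m$ we have $\Omega(m^n) = n\,\Omega(m)$, and hence for ultrafilters $\widehat{\Omega}(E_2(u,v)) = u \odot \widehat{\Omega}(v)$ — this is the same kind of identity already exploited in the excerpt (e.g. $\widehat{F}(E_2(p_1,p_2)) = p_1 \odot \widehat{F}(p_2)$). Applying this to $p = E_2(p,p)$ gives $\widehat{\Omega}(p) = p \odot \widehat{\Omega}(p)$, so $\widehat{\Omega}(p) \in \beta\N\setminus\N$ is a right fixed point of left multiplication by $p$. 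Now the multiplicative idempotency $p = p \odot p$ should be brought in to produce a second, incompatible constraint: from $p = E_2(p \odot p, p)$ and the same $\widehat{\Omega}$-identity we get $\widehat{\Omega}(p) = (p\odot p)\odot \widehat{\Omega}(p) = p \odot (p \odot \widehat{\Omega}(p))$; combined with $p\odot\widehat\Omega(p)=\widehat\Omega(p)$ this is consistent, so $\widehat{\Omega}$ alone is not enough. The resolution is to use instead the statistic that records the \emph{largest prime power} $H(n) = F(n)^{G(n)}$ from Case 2 of Proposition \ref{neqr}, together with $F$ itself. Since $p$ is multiplicatively idempotent and nonprincipal, $\widehat{F}(p) \in \beta\N \setminus \N$. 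From $p = E_2(p,p)$ one computes, using that $m^n$ has the same largest prime as $m$ and $H(m^n) = F(m)^{nG(m)}$, an identity of the form $\widehat{F}(p) = p \odot \widehat{F}(p)$ and a relation for $\widehat{H}$ that ties it to $p \oplus$ (something) via the exponent $n G(m)$; on the other hand the multiplicative idempotency forces $\widehat{H}(p)$ (up to passing to the top-prime part) to satisfy an additive idempotency. One then invokes Theorem \ref{TMal} or Theorem \ref{HSsp}: an equation of the shape $q \oplus u = v \odot r$ (or $u \oplus ap = v \oplus bp$ with $a\neq b$) arising from the clash of the additive structure coming from $E_2$-idempotency and the multiplicative structure coming from $\odot$-idempotency has no solution, giving the contradiction.

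**Main obstacle.** The delicate point is choosing the right finite-valued combinatorial statistic $\N \to \N$ whose $\beta\N$-extension turns the equation $p = E_2(p\odot p, p)$ into something of the form covered by Theorem \ref{TMal} or Theorem \ref{HSsp} — i.e.\ an equation mixing $\oplus$ and $\odot$ that is provably unsolvable for nonprincipal ultrafilters — while simultaneously respecting the multiplicative idempotency $p = p\odot p$. The statistics $F$, $G$, $\Omega$, $H$ used in Proposition \ref{neqr} are the natural candidates, and I expect the winning combination to be: project onto the top prime via $\widehat F$ (nonprincipal by $\odot$-idempotency), then track the exponent of that prime, where the $E_2$-exponentiation multiplies exponents (giving an $\odot$ on the exponent ultrafilter) while an application of Proposition \ref{alg exp} rewrites $E_2(p,p) = E_2(p\odot p, p)$ in a way that forces that exponent ultrafilter to be simultaneously an $\oplus$-idempotent — contradicting Theorem \ref{HSsp} (since the exponent of the top prime ranges over all of $\N$, giving $|\{a \in \N \mid a\Z \in r\}| = |\N|$). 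Verifying that the extensions of these statistics behave continuously and yield exactly the hypothesis of Theorem \ref{HSsp} is the step that needs care; everything else is bookkeeping of the kind already done in the excerpt.
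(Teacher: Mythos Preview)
You have all the right pieces but abandon the winning line exactly one move before it finishes. You correctly derive $\widehat{\Omega}(p)=p\odot\widehat{\Omega}(p)$ from $p=E_{2}(p,p)$, and then declare that ``$\widehat{\Omega}$ alone is not enough'' because applying it to $p=E_{2}(p\odot p,p)$ gives nothing new. That is true, but the point is not to feed the multiplicative idempotency through $E_{2}$ again: feed it through $\odot$ directly. Since $\Omega$ is completely additive on $\N$, i.e.\ $\Omega(mn)=\Omega(m)+\Omega(n)$, one has $\widehat{\Omega}(p\odot p)=\widehat{\Omega}(p)\oplus\widehat{\Omega}(p)$, and hence from $p=p\odot p$ you get that $\widehat{\Omega}(p)$ is an \emph{additive} idempotent. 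Now the two consequences sit side by side:
\[
\widehat{\Omega}(p)\oplus\widehat{\Omega}(p)=\widehat{\Omega}(p)=p\odot\widehat{\Omega}(p),
\]
which is precisely an equation of the shape $q\oplus p'=s\odot r$ forbidden by Theorem~\ref{HSsp}, once you check $\widehat{\Omega}(p)\in\beta\N\setminus\N$ (else $\widehat{\Omega}(p)=0$ and $p=1$) and that a nonprincipal additive idempotent contains $n\N$ for every $n$. This is exactly the paper's proof.

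Your proposed detour through $F$, $G$, $H$ is not only unnecessary but likely does not close: the exponent $G(n)$ of the top prime is \emph{not} additive under multiplication (for $m,n$ with different largest primes, $G(mn)$ equals whichever of $G(m),G(n)$ corresponds to the larger prime, not $G(m)+G(n)$), so you would not get an $\oplus$-idempotent out of $p=p\odot p$ along that route without substantial extra work. The whole argument really does run on $\Omega$ alone.
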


\begin{proof} By contrast, let $p$ be such an ultrafilter. Let $\Omega:\N\rightarrow\N$ be the function that maps $n$ to the sum of the exponents in its prime factorization. In particular, $\widehat{\Omega}(p)=\widehat{\Omega}(E_{2}(p,p))=p\odot \widehat{\Omega}(p)$ and $\widehat{\Omega}(p)=\widehat{\Omega}(p\odot p)=\widehat{\Omega}(p)\oplus \widehat{\Omega}(p)$, therefore $\widehat{\Omega}(p)$ is additively idempotent and $p\odot \widehat{\Omega}(p)=\widehat{\Omega}(p)\oplus \widehat{\Omega}(p)$. If $\widehat{\Omega}(p)\in\N$, then by additive idempotency we would get that $\widehat{\Omega}(p)=0$, from which $p=1$, which is a contradiction as $p\in\beta\N\setminus\N$. So $\widehat{\Omega}(p)$ is additively idempotent and nonprincipal, hence $n\N\in \widehat{\Omega}(p)$ for all $n\in\N$. Thus we can apply Theorem \ref{HSsp} to deduce that $p\odot \widehat{\Omega}(p)\neq \widehat{\Omega}(p)\oplus \widehat{\Omega}(p)$, reaching the desired contradiction. \end{proof}

\section{Open questions}\label{questions}

In this section, we summarize some questions which we could not solve, as well as some possible directions we believe it could be interesting to pursue in the future.

The first question originates from the fact that, in \cite{key-4}, Sahasrabudhe proved the partition regularity of the configuration $\{x,y,xy,x^{y}\}$. We tried to recover a more direct proof of this fact, along the lines of what we developed in Section \ref{extexp}, but we failed. Our guess is that, by choosing $p$ in a reasonable way, the partition regularity of $\{x,y,xy,x^y\}$ should be witnessed by $2^{p}$. We conjecture that a family of ultrafilters $p$ that should make our claim true is the family of those $ p\in E \left( K(\bN,\odot ) \right) $ such that each member of $p$ is also additive central. Summarizing, our first question is the following:
\begin{question}
    Let $ p\in E \left( K(\bN,\odot ) \right) $ be such that each member of $p$ is also additively central. Is it true or not that each $A\in p$ contains a configuration of the form $\{m,n,m+n,m2^n\}$?
\end{question}
Notice that an affirmative answer to the above question would immediately entail, by using the same methods adopted in this paper, that each member of $2^p$ contains patterns of the form $\{x,y,xy,x^y\}$. 

The following question regards several technical properties of the exponential operations $E_{1}, E_{2}$. We proved the existence and non-existence of ultrafilters satisfying certain equalities; for some other similar ones, we did not manage to find a definitive answer.
 \begin{question}
Prove that there exist $p,q,r\in \bN\setminus \N$ that solve any of the following equations 
%\textcolor{blue}{check if any of these has been solved in the paper; I think not}
\begin{enumerate}
\item $E_{1}(p,q)=E_{1}(q,p)$;
\item $E_{2}(p,q)=E_{2}(q,p)$;
\item $E_{1}(p,q)=E_{2}(q,p)$; 
%\textcolor{red}{We can write the first three as: $E_i(p,q)=E_j(q,p)$ for some $i,j\in \{1,2\}.$ Similarly pair 4,6 and 5,7.}
\item $E_{1}(q,p)\odot p=p$;
\item $E_{1}(q,p)\odot q=q$;
\item $E_{1}(r,p)\odot E_{1}(r,q)= E_{1}(r,p\oplus q)$;
\item $E_{2}(q,p)\odot p=p$;
\item $E_{2}(q,p)\odot q=q$;
\item $E_{2}(p,r)\odot E_{2}(q,r)= E_{2}(p\oplus q,r)$.
\end{enumerate}
\end{question}

One particular case of equality was discussed in Example \ref{equalE1E2}, from which the following question arises:

\begin{question} For which $p,q\in\beta\N\setminus\N$ does the equality $E_{1}(p,q)=E_{2}(p,q)$ hold? Does it exist $p\in\beta\N\setminus\N$ such that $E_{1}(p,p)=E_{2}(p,p)$?\end{question}

Of course, as discussed in the paper, more relevant questions are the following:

\begin{question} Does it exist $p$ that is $E_{2}$-idempotent?\end{question}

Finally, as a general direction connected to all the above questions, we believe that it would be interesting to further study the properties of exponentially rich ultrafilters; of course, this rather vague idea could be made precise in several different ways. We believe it would be interesting to start by studying the following:

\begin{question} What are the algebraical properties of $\mathrm{ESW}$? \end{question}

{\bfseries Acknowledgements:} We thank the referee for his/her careful reading of the paper, which led to several improvements; in particular, we are thankful for the suggestion of isolating Lemma \ref{lem:basicproplogexp} to simplify the proofs of Proposition \ref{alg exp}.(2)-(3), for proposing an explicit proof of the properties of $\widehat{H}$ used in Case 2 of Proposition \ref{neqr}, and for indicating Theorem \ref{exip} as a striking example of the eventual consequences of the existence of $E_{2}$-idempotents.

\end{document}